\newtheorem{theorem}{Theorem}[section]
\newtheorem{corollary}[theorem]{Corollary}
\theoremstyle{definition}
\newtheorem{definition}[theorem]{Definition}
\newtheorem{example}[theorem]{Example}
\newtheorem{question}[theorem]{Question}
\theoremstyle{remark}
\newtheorem{remark}[theorem]{Remark}
\numberwithin{equation}{section}
\newcommand{\N}{\mathbb{N}}
\newcommand{\Z}{\mathbb{Z}}
\newcommand{\Q}{\mathbb{Q}}
\newcommand{\R}{\mathbb{R}}
\newcommand{\T}{\mathbb{T}}
\newcommand{\K}{\mathbb{K}}
\newcommand{\OO}{\mathcal{O}}
\newcommand{\D}{\mathcal{D}}
\DeclareMathOperator{\Id}{Id}
\DeclareMathOperator{\BF}{BF}
\DeclareMathOperator{\sign}{sign}
\DeclareMathOperator{\Inf}{Inf}
\begin{document}

\title[Invertible and noninvertible symbolic dynamics]{Invertible and noninvertible symbolic dynamics and their C*-algebras}

\author[K.A.~Brix]{Kevin Aguyar Brix}
\address{School of Mathematics and Statistics, University of Glasgow, University Place, Glasgow G12 8QQ, United Kingdom}
\email{kabrix.math@fastmail.com}
\thanks{The author gratefully acknowledges a DFF-international postdoc grant (Case number 1025-00004B)}



\begin{abstract}
  This paper surveys the recent advances in the interactions between symbolic dynamics and C*-algebras.
  We explain how conjugacies and orbit equivalences of both two-sided (invertible) and one-sided (noninvertible) symbolic systems may be encoded into C*-algebras,
  and how the dynamical systems can be recovered from structure-preserving *-isomorphisms of C*-algebras.
  We have included many illustrative examples as well as open problems.
\end{abstract}

\maketitle

\setcounter{tocdepth}{1}
\tableofcontents

\section{Introduction}

This paper surveys some of the recent advances in the interactions between topological dynamical systems and C*-algebras
with an emphasis on symbolic dynamics.
The earliest examples of encoding dynamics into operators on Hilbert space include the group von Neumann algebra of Murray and von Neumann~\cite{Murray-Neumann1943},
and von Neumann's ergodic theorem~\cite{Neumann1931};
a myriad of examples later followed via the crossed product construction (broadly construed).
It was Cuntz and Krieger~\cite{Cuntz-Krieger1980} who established the first connections between C*-algebras and symbolic dynamics (viz. shifts of finite type),
and this provided an abundance of simple and purely infinite C*-algebras.
An immediate advantage of constructing C*-algebras (e.g. Cuntz--Krieger algebras) from dynamics
is that C*-features such as ideal structure, KMS structure, or K-theory are intimately connected to the irreducible components, the entropy,
and known invariants of the dynamical systems, respectively.

With this work, I hope to also show that this influence can go the other way: dynamics can benefit from operator algebras.
This is perhaps most starkly seen in Giordano, Putnam, and Skau's classification of Cantor minimal systems up to orbit equivalence~\cite{Giordano-Putnam-Skau1995, Giordano-Putnam-Skau1999},
their use of crossed product C*-algebras, and topological full groups (which now live a mature life of their own),
but also in Krieger's dimension groups that serve to completely characterise shifts of finite type up to shift equivalence~\cite{Krieger1980a,Krieger1980b},
as well as Huang's classification of two-component shifts of finite type up to flow equivalence using an idea of Cuntz from C*-algebras~\cite{Huang1994}. 

Symbolic systems~\cite{Lind-Marcus} are useful e.g.~in the analysis of Axiom A diffeomorphisms \cite{Bowen1978}, mathematical linguistics (regular languages and sofic shifts),
and coding theory (e.g. digitalising analogue signals).
In this paper we focus on general shift spaces although there will be an emphasis on shifts of finite type 
because they are quite well understood and provide inspiration for the more general systems.
We are interested in classification up to e.g. conjugacy or orbit equivalences, and how this is reflected in the C*-algebras.
Invariably, this leads to a study of the fine-structure of C*-algebras (e.g. diagonal subalgebras and gauge actions) and structure-preserving *-isomorphisms between them.
For graphs, this approach is prevalent in the study of how geometrical moves are reflected in *-isomorphisms of graph C*-algebras~\cite{ERRS2016,Eilers-Ruiz}.

During the time of writing this paper, I gave talks at Queen's University Belfast and University of Cardiff on related topics,
and I thank Ying-Fen Lin and Ulrich Pennig for the kind invitations.
The fact that so many subtly different equivalence relations on dynamical systems appear in this topic presents a challenge when giving a talk.
It is my hope that this work may serve as a gentle introduction to the broader picture of how symbolic dynamical system and C*-algebras are related.
Most of the material covered here is established in the literature, and plenty of references are provided that the interested reader may consult,
although a few observations (especially in~\cref{sec:discrepancy}) seem to not have appeared before.

I have tried to keep the exposition short and, regrettably, there are many (interesting) aspects we cannot address here.
This includes graphs (there is a plethora of literature on directed graphs and their C*-algebras, see~\cite{Raeburn2005} and its references),
Leavitt path algebras (the algebraic counterpart of graph C*-algebras),
multidimensional dynamics, and
Smale spaces. 
Topological full groups will briefly be mentioned in~\cref{sec:continuous-orbit-equivalence} on continuous orbit equivalence.

What we will be able to discuss are the topics listed in the table of content
initiated by a preliminary section that covers the basics of symbolic dynamics and associated C*-algebras.
Two things to emphasise.
First, there is a tension between two-sided (invertible) systems, which seem to be of most interest to dynamicists,
and one-sided (noninvertible) systems that seem more natural from a C*-algebraic perspective.
For example,~\cite{Lind-Marcus} only dedidates a few pages to one-sided shifts and the new addendum to the book does not discuss them at all;
I hope to convey that one-sided dynamics can be just as interesting as its invertible sibling.
Second, there is an unfortunate discrepancy in the notion of one-sided eventual conjugacy and this is addressed in~\cref{sec:discrepancy}.

\section{Preliminaries} \label{sec:prelim}

Let $\Z$ be the integers, let $\N$ be the nonnegative integers (including $0$), and let $\N_+$ be the positive integers.
An $\N$-matrix $A$ is a matrix all of whose entries are in $\N$, and if $A$ is square, we let $|A|$ denote the dimension of $A$.
All matrices are finite-dimensional.

\subsection{Symbolic dynamics}
For a general and thorough introduction to classical theory of symbolic dynamics, we refer the reader to the excellent textbooks~\cite{Lind-Marcus} and~\cite{Kitchens}.
Although the two-sided systems consisting of bi-infinite sequences seem to be more popular among dynamicists we shall here emphasise the one-sided systems consisting of right-infinite sequences
as they fit nicer with the perspectives on C*-algebras.

Let $N\in \N_+$ and consider the set of \emph{symbols} $\{0,\ldots,N-1\}$ as a discrete space.
The \emph{full one-sided $N$-shift} is the compact metric space $\{0,\ldots,N-1\}^\N$ (equipped with the product topology) of right-infinite sequences 
together with the \emph{shift operation} $\sigma$ given by $\sigma(x)_i = x_{i+1}$ for all $i\in \N$ and $x\in \{0,\ldots,N-1\}^\N$.
A metric $d$ on $\{0,\ldots,N-1\}^\N$ may be given as $d(x,y) = 2^{-\min\{k\in \N : x_k \neq y_k\}}$, for distinct points $x,y\in \{0,\ldots,N-1\}^\N$.
The shift is a local homeomorphism (open and locally injective) and positively expansive 
in the sense that there is $\varepsilon > 0$ such that if $x,y\in \{0,\ldots,N-1\}^\N$ are distinct, 
then
\[
  d(\sigma^k(x), \sigma^k(y)) > \varepsilon.
\]
for some $k\in \N$.

If $x\in \{0,\ldots,N-1\}^\N$, then we write $x = x_{[0,\infty)} = x_0 x_1 \cdots$ where $x_i \in \{0,\ldots,N-1\}$,
and for integers $0\leq i \leq j$, we have $x_{[i,j]} = x_i x_{i+1} \cdots x_j$ is a finite word in the symbols $\{0,\ldots,N-1\}$
(similarly, $x_{[i,j)}$ and $x_{(i,j]}$ will have the obvious meaning).
We say the finite word $x_{[i,j}$ occurs in $x$, and we allow for a unique empty word $\epsilon$ that occurs in every sequence.

A \emph{one-sided subshift} (or a \emph{one-sided shift space}) is a closed subset $X$ of $\{0,\ldots,N-1\}^\N$
that is shift-invariant in the sense that $\sigma(X) \subset X$ (we do not assume equality here).
The restriction $\sigma_X$ of the shift to $X$ need not be open 
but it is always positively expansive, in particular it is locally injective.
The language $L(X)$ of a subshift $(\sigma, X)$ is all the finite words that occur in some $x\in X$,
and this in fact determines the whole subshift, cf.~\cite[Proposition 1.3.4]{Lind-Marcus}.
Moreover, a basis for the topology on $X$ is given by the cylinder sets
\[
  Z(\mu) = \{ x\in X : x_{[0,|\mu|)} = \mu \},
\]
where $\mu$ is a word that occurs in $X$.

A \emph{two-sided subshift} is a closed and shift-invariant subset $\underline{X} \subset \{0,\ldots,N-1\}^\Z$.
We use the notation $\underline{X}$ to emphasise that it is the space of bi-infinite sequences
although we use the same notation $\sigma$ for the shift; 
it should always be clear from the context what we mean.
The above notation also extends to the two-sided setting.
There is a natural continuous shift-commuting surjection (a \emph{factor map}) $\rho\colon \underline{X} \to \{0,\ldots,N-1\}^\N$ given by $\rho(x) = x_{[0,\infty)}$ for all $x\in \underline{X}$,
and the image $X$ is a one-sided subshift for which the shift is surjective.
Conversely, if $X$ is a one-sided shift for which $\sigma_X$ is surjective, we can recover the two-sided subshift $(\sigma_X, \underline{X})$
as the \emph{natural extension} given as the projective limit $\varprojlim_i(\sigma, X)$.

Two subshifts $(\sigma_X, X)$ and $(\sigma_Y, Y)$ are \emph{conjugate} if there exists a homeomorphism $h\colon X\to Y$ satisfying $h\circ \sigma_X = \sigma_Y\circ h$.

\begin{definition}
  A \emph{shift of finite type} is a subshift $(\sigma, X)$ for which there exists a finite set of words (the forbidden words) such that
  $X$ consists of all sequences that do not contain any forbidden words.
\end{definition}

\begin{example}\label{ex:SFT}
  The full $2$-shift may be represented by the directed graph
  \[
    \begin{tikzcd}
      \bullet \arrow[loop, looseness=5,"e" above] \arrow[loop, looseness=5,, out=310, in=230,"f"] 
    \end{tikzcd} 
  \]
  The space of right-infinite edge paths is $\{e,f\}^\N$ and we may equip this with the shift operation.
  The golden mean graph is given as
  \[
  \begin{tikzcd}
    \bullet \arrow[loop, looseness=5, "e" above] \arrow[r, bend right, "f" below] & \bullet \arrow[l, bend right, "g" above]
  \end{tikzcd}
  \]
  and its space of right-infinite paths is contained in $\{e,f,g\}^\N$, but the words $\{ff, fe, gg, eg\}$ are not allowed.
  This is a finite list of words that are forbidden, so the path space of the graph represents a shift of finite type.
\end{example}

The shift on the path space of a directed graph (in particular for a shift of finite type) is a local homeomorphism.
Parry~\cite[Theorem 1]{Parry1966} shows that a subshift is a local homeomorphisms if and only if it is of finite type. 

\begin{remark}
Any shift of finite type may be represented (up to conjugacy) by a finite directed graph,~\cite[Chapter 2]{Lind-Marcus}.
As such the system is equivalently determined by the adjacency matrix of the graph, so we shall frequently denote a shift of finite type as $X_A$ 
where $A$ is the adjacency matrix of the graph that represents the system.
\end{remark}

\begin{example}\label{ex:even-shift}
  Consider the labelled graph
   \[
  \begin{tikzcd}
    \bullet \arrow[loop, looseness=5, "1" above] \arrow[r, bend right, "0" below] & \bullet \arrow[l, bend right, "0" above]
  \end{tikzcd}
  \]
  with the labelled path space contained in $\{0,1\}^\N$.
  Any words of the form $1 0^{2n+1} 1$ cannot occur in any right-infinite path on the labelled graph,
  and this set of forbidden words cannot be reduced to a finite set.
  The shift is therefore not of finite type.
  It is usually called the \emph{even shift}; only an even number of $0$s is allowed between any two $1$s.
  Note that there is an obvious continuous and shift-commuting map (a factor map) from the path space of the golden mean graph of~\cref{ex:SFT} to the even shift.
  This is an example of a sofic shift.
\end{example}

The even shift shows that shifts of finite type have the intrinsic flaw of not being closed under factor maps (\emph{an unfair feature} in the words of Fischer). 
The smallest class containing shifts of finite type that is also closed under images is the sofic shifts
which were coined and studied by Weiss in~\cite{Weiss1973}.

\begin{definition}
  A subshift $(\sigma_X, X)$ is \emph{sofic} if it is the image of a continuous and shift-commuting map from a shift of finite type.
\end{definition}

\begin{remark}
  Any sofic shift may be represented (up to conjugacy) by a finite \emph{labelled} directed graph,~\cite[Chapter 3]{Lind-Marcus}.
  The underlying graph will then represent a shift of finite type from which there exists a factor map onto the sofic shift.
  Nasu~\cite{Nasu1986} has described a representation matrix (in non-commutative variables) that describe the labelled graph
  and this is a very useful tool to generalise many perspectives from the finite type case.
  We shall not discuss this further here.
\end{remark}

The representation of a sofic shifts is not unique,
but Fischer~\cite{Fischer1975} showed that any \emph{irreducible} sofic shift admits a unique labelled graph representation that is minimal and right-resolving 
(every vertex emits only edges with distinct labels).
This is not the case for general sofic shifts.
We now refer to this as the \emph{Fischer graph} or the \emph{Fischer cover}.
The Fischer graph of the even shift is depicted in~\cref{ex:even-shift}.

Krieger~\cite{Krieger1984} later associated to \emph{any} sofic shift a cover that is canonical
in the sense that any conjugacy at the level of sofic systems lifts to a unique conjugacy on the covers.
The Fischer graph is also canonical and two graphs need not coincide. 

\begin{definition}
  Let $(\sigma_X, X)$ be a one-sided subshift over the symbol set $\{1,\ldots,N\}$.
  The \emph{past set} of $x\in X$ is the set $P(x) = \{\nu \in L(X) : \nu x \in X\}$
  The \emph{(left) Krieger graph} of $(\sigma_X, X)$ is a labelled graph whose vertices is the collection of past sets,
  and there is an edge from $P$ to $P'$ with label $a\in \{1,\ldots,N\}$ if there exists $x\in X$ such that 
  $P = P(ax)$ and $P' = P(x)$.
\end{definition}

\begin{example}\label{ex:Krieger-cover}
  The labelled graph
   \[
  \begin{tikzcd}
    \bullet \arrow[loop, looseness=5, "1" above] \arrow[r, bend right, "0" below] \arrow[d, "1" left] & \bullet \arrow[l, bend right, "0" above]  \\
    \bullet  \arrow[loop, looseness=5,, out=310, in=230,"0" below]
  \end{tikzcd}
  \]
  is the Krieger cover of the even shift. 
  There are three distinct past sets e.g. represented by $0^\infty$, $10^\infty$, and $010^\infty$:
  \begin{align*}
    P(10^\infty)  &= \{ 0, 1, 00, 01, 11, \ldots\} \\
    P(010^\infty) &= \{ 0, 00, 10, \ldots\} \\
    P(0^\infty)   &= \{ 0, 1, 00, 01, 10, 11, \ldots\}.
  \end{align*}
  The reader may verify that the above graph is indeed the Krieger graph of the even shift.
  
  Observe that if $\mu$ is a word in the even shift that contains $1$ and if $\nu$ and $\omega$ are words such that $\nu \mu$ and $\mu \omega$ are allowed,
  then $\nu \mu \omega$ is also allowed. 
  However, if $\mu = 000$, then we can choose $\nu = 01$ and $\omega = 10$ and observe that both $\nu \mu$ and $\mu \omega$ are allowed 
  but $\nu \mu \omega = 01 000 10$ is not allowed. 
  A similar argument works for any word consisting only of $0$s.

  Every word occuring in $0^\infty$ is thus not synchronising, 
  and if we remove this part of the graph we obtain the Fischer graph of the even shift in~\cref{ex:even-shift}.
  This is no coincidence.
\end{example}

A word $\mu$ in a subshift $(\sigma_x, X)$ is \emph{intrinsically synchronising} if whenever $\nu$ and $\omega$ are words in $X$ and $\nu \mu$ and $\mu \omega$
then $\nu \mu \omega$ is also allowed.
In the even shift, a word is intrinsically synchronising if and only if it contains a $1$.
A right-infinite sequence $x\in X$ is synchronising if it contains an intrinsically synchronising word.
Krieger~\cite{Krieger1984} showed that the Fischer graph arises as the subgraph of the Krieger graph
whose vertices are the past sets of synchronising elements.

Below we mention a class of subshifts that is very different from the sofic shifts.

\begin{example}
  Let $R_\alpha$ be the rigid rotation on $\R/\Z$ by an irrational parameter $\alpha\in (0,1)\setminus \Q$.
  Define a coding map $\nu\colon \R/\Z \to \{0,1\}$ by $\nu(t) = 1$ if $t\in [0,\alpha)$, and $\nu(t) = 0$ if $t\in (\alpha, 1]$.
  The two-sided Sturmian shift is then the closure of $\{ \nu(R^i_\alpha(t))_{i\in \Z} : t\in \R/\Z\}$ in $\{0,1\}^\Z$.
  This is an example of a minimal homeomorphism on the Cantor space, cf.~\cref{sec:CMS},
  it has no periodic points and zero entropy, cf.~\cite[Section 13.7]{Lind-Marcus}.
  A one-sided Sturmian shift is defined analogously.
\end{example}

\subsection{C*-algebras}
Throughout the paper, we let $\K$ denote the compact operators on separable Hilbert space
and $c_0$ the commutative subalgebra of diagonal operators.

Let $A$ be an irreducible and nonpermutation $\{0,1\}$-matrix of dimension $n$.
Cuntz and Krieger~\cite{Cuntz-Krieger1980} define a C*-algebra $\OO_A$ (today known as the \emph{Cuntz--Krieger algebra}) 
as \emph{the} C*-algebra generated by $n$ partial isometries $s_1,\ldots,s_n$ satisfying the relation
\begin{equation} \label{eq:Cuntz-Krieger-family}
  1 = \sum_{i=1}^n s_i s_i^*, \qquad \textrm{and} \qquad   s_j^* s_j = \sum_{i=1}^n A(i,j) s_is_i^*
\end{equation}
for all $j=1,\ldots,n$.
Really they assume $A$ satisfy a technical condition (I) but irreducible and nonpermutation matrices are examples of this.
They show that $\OO_A$ is simple (it contains no nontrivial closed two-sided ideals),
and that it is defined independently of the specific Hilbert space on which the partial isometries act (up to canonical *-isomorphism), so the above \emph{the} is justified.
They provide a large class of examples of \emph{purely infinite} C*-algebras, cf.~\cite{Cuntz1981a}.
When the dimension of $A$ is $n\geq 2$ and every entry of $A$ is $1$ then the partial isometries are honest isometries, 
and $\OO_A$ is canonically isomorphic to Cuntz' algebra $\OO_n$, cf.~\cite{Cuntz1977}.
A diagonal subalgebra $\D_A$ of $\OO_A$ is generated by projections of the form $s_\mu s_\mu^*$ where $\mu = \mu_1 \cdots \mu_m$ is a finite word in $X_A$ and $s_\mu = s_{\mu_1} \cdots s_{\mu_m}$;
$\OO_A$ also admits a \emph{gauge action} $\gamma^A$ of $\T$ determined by $\gamma_z^A(s_i) = z s_i$ for all $z\in \T$ and generators $s_i$.

When $A$ is not irreducible the C*-algebras generated by similar relations is not simple and will generally depend on the Hilbert space on which the generators act.
It is however possible to define $\OO_A$ as a universal C*-algebra subject to \emph{Cuntz--Krieger families}~\cite[Section2]{anHuef-Raeburn1997}:
a Cuntz--Krieger family for $A$ is a collection of partial isometries $\{S_i\}_i$ satisfying~\labelcref{eq:Cuntz-Krieger-family},
and the C*-algebra $\OO_A$ is universal if it is generated by a Cuntz--Krieger family $\{s_i\}_i$,
and for any other family $\{S_i\}_i$ on a Hilbert space $H$ there is a *-representation $\OO_A \to B(H)$ sending $s_i \mapsto S_i$, for all $i$.

The construction of C*-algebras was extended to graphs in~\cite{Enomoto-Watatani1980, Kumjian-Pask-Raeburn-Renault1997}
where the latter used topological groupoids whose unit space is the right-infinite paths on the graphs.
This path space admits a shift operation which is a local homeomorphism.
We shall not dig deeper into graph C*-algebras here but simply refer the interested reader to Raeburn's monograph~\cite{Raeburn2005}
which constructs universal C*-algebras from generators and relations.

How can these constructions of C*-algebras be generalised to general subshifts e.g. sofic shifts?

Matsumoto was the first to associate C*-algebras to general subshifts in~\cite{Matsumoto1997} and subsequent papers using a Fock space construction.
Unfortunately, there was a mistake in a technical lemma which caused some confusion (this is however nicely clarified in~\cite{Carlsen-Matsumoto2004}).

For sofic shifts (under some technical conditions), Carlsen~\cite{Carlsen2003} and Samuel \cite{Samuel1998} independently observed that Matsumoto's C*-algebra  
is isomorphic to the Cuntz--Krieger algebras of their Krieger graphs.
In fact, we may now \emph{define} the C*-algebra of a sofic shift to be the Cuntz--Krieger algebra of its Krieger cover.
Similarly, we can define a C*-algebra of an irreducible sofic shift to be the Cuntz--Krieger algebra of its Fischer cover;
this will in general be a quotient of the former C*-algebra.

The approach in~\cite{Brix-Carlsen2020b} is to construct a canonical cover $(\sigma_{\tilde{X}}, \tilde{X})$ to any subshift $(\sigma_X, X)$ that generalises the path space of the Krieger graph.
This is a local homeomorphism from which we can construct an \'etale groupoid and therefore a C*-algebra $\OO_X$. 
The cover is useful because the shift on a general subshift is usually not a local homeomorphism (only if it is of finite type),
so the associated groupoid will usually not be \'etale.
Alternatively, Matsumoto~\cite{Matsumoto2020-normal} has constructed C*-algebras of so-called normal subshifts that generalise the Fischer cover.

The covers, groupoids, and C*-algebras of Sturmian shifts were studied in~\cite{Brix2021}.

\begin{remark}
We mention here briefly the advantage of \'etale groupoids.
Conceptually, groupoids can encode the orbit structure of a dynamical system and they should be thought of as the non-commutative orbit space.
The interested reader should consult~\cite{Sims-notes} for a friendly introduction to the topic
(see also~\cite{Renault-thesis} for the source of everything groupoid C*-algebras).

A local homeomorphism $T\colon X\to X$ on a locally compact Hausdorff space $X$
determines a groupoid
\[
  G_T = \bigcup_{m,n\in \N} \{ (x, m-n, y)\in X\times \Z \times X : T^m x = T^n y \}
\]
with unit space $G^{(0)} = \{ (x,0,x) : x\in X\}$ canonically identified with $X$,
and range and source maps $r(x,p,y) = x$ and $s(x,p,y) = y$ for all $(x,p,y)\in G_\sigma$.
A pair of elements $(x,p,y)$ and $(y',q,z)$ are composable precisely if $y=y'$ in which case $(x,p,y)(y,q,z) = (x,p+q,z)$,
and inversion is given as $(x,p,y)^{-1} = (y,-p,x)$.
Under a suitable topology inherited from $X$, $G_T$ is a locally compact Hausdorff, \'etale and amenable groupoid.
As such it defines groupoid C*-algebra $C^*(T) = C^*(G_T)$ which contains the functions on the unit space $C_0(X)$ (the \emph{diagonal}),
and the groupoid homomorphism $c_\sigma\colon G_T \to \Z$ given by $c_\sigma(x,p,y) = p$ for all $(x,p,y)\in G_\sigma$,
induces a \emph{gauge action} $\gamma^T$ of $\T$ on $C^*(G_T)$.
When $T = \sigma_A$ is a shift of finite type, then $C^*(G_{\sigma_A})$ is canonically isomorphic to $\OO_A$
in a way that is diagonal-preserving and gauge-equivariant.

A wonderful theorem of Renault~\cite{Renault2008} allows for a reconstruction of a topological groupoid (up to isomorphism) from its C*-algebra with diagonal subalgebra,
see also~\cite{Renault-thesis,Kumjian1986}.
Below we state a generalised version of the result from~\cite[Section 3]{CRST}.
The technical conditions are satisfied for all examples relevant to this paper and we shall not elaborate on them here.

\begin{theorem} \label{thm:Renault-reconstruction}
  Let $G_1$ and $G_2$ be a second-countable, Hausdorff, \'etale groupoids whose interior of isotropy is abelian and torsion-free.
  If there exists a *-isomorphism $\varphi\colon C^*_r(G_1) \to C^*_r(G_2)$ satisfying $\varphi(C_0(G_1^{(0)})) = C_0(G_2^{(0)})$,
  then $G_1$ and $G_2$ are isomorphic as topological groupoids.
\end{theorem}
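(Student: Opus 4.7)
The plan is to reconstruct each groupoid $G_i$ (for $i=1,2$) from the pair $(C^*_r(G_i), C_0(G_i^{(0)}))$ via a Weyl-type construction, and then transport this construction across $\varphi$.

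First I would recover the unit space: since $\varphi$ is a $*$-isomorphism of the (commutative) diagonal subalgebras $C_0(G_1^{(0)}) \to C_0(G_2^{(0)})$, Gelfand duality produces a homeomorphism $h \colon G_2^{(0)} \to G_1^{(0)}$. Next I would introduce the \emph{normalizer} semigroup
\[
  N(C^*_r(G_i), C_0(G_i^{(0)})) = \bigl\{n \in C^*_r(G_i) : n\, C_0(G_i^{(0)})\, n^*,\ n^*\, C_0(G_i^{(0)})\, n \subset C_0(G_i^{(0)})\bigr\},
\]
noting that $\varphi$ carries normalizers to normalizers and respects the natural inverse and product structure. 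For each normalizer $n$, the elements $n^*n$ and $nn^*$ sit in the diagonal, so their Gelfand supports are open subsets of the unit space, and conjugation by $n$ induces a partial homeomorphism $\alpha_n$ between them. This yields the germ data needed for a Weyl groupoid.

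The Weyl groupoid $G_i^W$ is then defined as equivalence classes of pairs $(n,x)$, where $n$ is a normalizer and $x$ lies in the open support of $n^*n$, under the equivalence that identifies $(n,x) \sim (m,x)$ whenever $nd = md$ in some neighbourhood of $x$ for a suitable $d \in C_0(G_i^{(0)})$ with $d(x) \neq 0$. The essential point is the comparison map $G_i \to G_i^W$ sending a groupoid element to the germ of a normalizer supported on an open bisection through it; one must show this is an isomorphism of topological groupoids. For groupoids that are effective (topologically principal) this is Renault's original theorem, and because $\varphi$ preserves the diagonal it preserves normalizers and germs, giving the transfer $G_1^W \cong G_2^W$.

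The technical heart of the argument, and what I expect to be the main obstacle, is handling the nontrivial interior of isotropy. When the isotropy interior is nontrivial, a single germ of a normalizer does not pin down a unique groupoid element, so the naive Weyl construction only recovers $G_i$ modulo the interior of its isotropy. Under the hypothesis that this interior is an abelian torsion-free group bundle, one can apply Pontryagin duality fibrewise: the isotropy subalgebra sits as the fixed-point algebra for the canonical conditional expectation onto the diagonal restricted to appropriate subalgebras, and its spectrum assembles into the isotropy bundle in a way that $\varphi$ must respect (because $\varphi$ is diagonal-preserving). One then enhances the Weyl groupoid by attaching this recovered isotropy bundle along each germ, and verifies on generators that the resulting groupoid is canonically isomorphic to $G_i$. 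Feeding this through $\varphi$ yields the desired topological groupoid isomorphism $G_1 \cong G_2$. The detailed verification that the enhanced equivalence relation is well-defined and produces a Hausdorff étale groupoid under the standing second-countability and isotropy hypotheses is precisely where the assumptions on $\mathrm{Iso}(G_i)^\circ$ are used, and this is the step I would allocate the most care to.
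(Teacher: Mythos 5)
The paper does not prove this statement at all: it is quoted from \cite[Section 3]{CRST} with the remark that the technical hypotheses will not be elaborated, so there is no in-paper argument to match yours against. Your overall strategy --- Gelfand duality on the diagonal, the normalizer semigroup, a Weyl groupoid of germs, and a correction for the interior of the isotropy --- is indeed the strategy of the cited source (Renault's reconstruction theorem extended by Carlsen--Ruiz--Sims--Tomforde), so the skeleton is right.

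The genuine gap is exactly the step you defer: how the Weyl groupoid is ``enhanced'' when $\Iso(G_i)^\circ$ is nontrivial, and the mechanism you sketch for it is not the one that works. You propose to recover the isotropy bundle separately (as a fixed-point algebra of the conditional expectation, then fibrewise Pontryagin duality) and to ``attach'' it along each germ; but a germ of a partial homeomorphism does not come with a preferred basepoint in the isotropy fibre, so there is no canonical way to glue an abstract group bundle onto the germ groupoid, and well-definedness of the composition would fail. What CRST actually do is refine the equivalence relation on the pairs $(n,x)$ themselves: if $n_1$ and $n_2$ induce the same partial homeomorphism near $x$, then $n_1^*n_2$ lies (locally) in $C^*(\Iso(G)^\circ)$, its image in the fibre $C^*(\Iso(G)^\circ_x)\cong C(\widehat{\Gamma_x})$ is a positive function times a single group element, and $(n_1,x)\sim(n_2,x)$ is declared to hold precisely when that group element is trivial. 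Torsion-freeness enters here in an essential and specific way --- $\widehat{\Gamma_x}$ is connected if and only if $\Gamma_x$ is torsion-free, and connectedness is what forces a normalizer inducing the trivial germ to sit over a \emph{single} isotropy element, making the refined relation well defined and compatible with multiplication. Your write-up correctly flags that the hypotheses on $\Iso(G_i)^\circ$ must be used at this point, but it neither identifies this mechanism nor gives an alternative that would close the argument, so as it stands the proof is incomplete at its decisive step.
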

This is an immensely useful tool in questions related to dynamical systems and their C*-algebras.
Many variations on Renault's reconstruction theory are now available in the literature but Renault's paper is still worth studying today.
\end{remark}

\section{Two-sided conjugacy} 
When are two dynamical systems the same? 
We can illustrate the immense complexity of this question by asking when two-sided shifts of finite type are conjugate.
This fundamental problem of symbolic dynacmis has still not been satisfactorially solved,
and the question of whether there exists a decision algorithm to determine conjugacy is still open.

We cannot address this problem without mentioning the efforts of Williams in the seminal work~\cite{Williams1973}.
In it, he addressed a question of Bowen (are shifts of finite type determined by their zeta function?)
and provided an algebraic classification of two-sided shifts of finite type in terms of $\N$-matrices; this is the notion of strong shift equivalence.

\subsection{Strong shift equivalence}
Let us start with an example.

\begin{example}[In-splitting] \label{ex:in-split}
  \label{ex:in-split-golden-mean}
  Consider the graph below with adjacency matrix $A$
  \[
  \begin{tikzcd}
    \bullet \arrow[loop, "e" above] \arrow[r, bend left, "f" above] & \bullet \arrow[l, "g" below] \arrow[l, bend left, "h" below] 
  \end{tikzcd} \qquad
  A = 
  \begin{pmatrix}
    1 & 1 \\
    2 & 0
  \end{pmatrix}.
  \]
  The left-most vertex $v$ has three edges going \emph{in} ($e$, $g$ and $h$),
  and we split $v$ into two vertices (aligned on top of each other below) 
  and distribute the edges $e$ and $h$ to top copy of $v$ and $g$ to the bottom copy;
  the outgoing edges $e$ and $f$ are copied so there is one emitted from each of the two copies of $v$.
  We then get the graph below with adjacency matrix $B$
  \[
    \begin{tikzcd}
      \bullet \arrow[loop, "e^1" above] \arrow[r, "f^1" below left] & \bullet \arrow[l, bend right, "h^1" above] \arrow[dl, bend left, "g^1" below]\\
      \bullet \arrow[u, "e^2" left] \arrow[ur, "f^2" above]
    \end{tikzcd} \qquad
    B = 
    \begin{pmatrix}
      1 & 0 & 1 \\
      1 & 0 & 1 \\
      1 & 1 & 0
    \end{pmatrix}.
  \]
  Note that the loop $e$ is both incoming and outgoing for $v$ which is why we also have two copies.
  The two-sided shifts are conjugate, and it is possible to write down an explicit conjugacy.

  Alternatively, the in-split can be encoded into the two matrices 
  \[
    R = 
    \begin{pmatrix}
      1 & 0 \\
      1 & 0 \\
      1 & 1
    \end{pmatrix}, \qquad \textrm{and} \qquad
    S = 
    \begin{pmatrix}
      1 & 0 & 1 \\
      0 & 1 & 0
    \end{pmatrix},
  \]
  for which we have $B = RS$ and $SR = A$.
  This is an example of an (elementary) strong shift equivalent between $A$ and $B$.
\end{example}

Based on such observations, Williams introduced strong shift equivalence.

\begin{definition}
  A pair of square $\N$-matrices $A$ and $B$ are \emph{elementary strong shift equivalent}
  if there are rectangular $\N$-matrices $R$ and $S$ such that $A = RS$ and $SR = B$,
  and \emph{strong shift equivalent} if there are $\ell \in \N$ and matrices $A = C_0,\ldots,C_\ell = B$ such that 
  $C_i$ and $C_{i-1}$ are elementary strong shift equivalent for all $i = 1,\ldots,\ell$.
\end{definition}

The classification of two-sided shifts of finite type is from~\cite[Theorem A]{Williams1973}.
\begin{theorem} 
  A pair of two-sided shifts of finite type $(\sigma_A, \underline{X}_A)$ and $(\sigma_B, \underline{X}_B)$ are conjugate 
  if and only if the matrices $A$ and $B$ are strong shift equivalent.
\end{theorem}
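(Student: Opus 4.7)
The plan is to prove both implications. The reverse direction admits a direct construction, while the forward direction is the substantial combinatorial core of Williams' theorem.

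For the reverse direction, by chaining elementary equivalences along $A = C_0, C_1, \ldots, C_\ell = B$, it suffices to exhibit a conjugacy when $A$ and $B$ are elementary strong shift equivalent, say $A = RS$ and $SR = B$. I would form the bipartite block matrix
\[
  C = \begin{pmatrix} 0 & R \\ S & 0 \end{pmatrix}, \qquad \text{so that} \qquad C^2 = \begin{pmatrix} A & 0 \\ 0 & B \end{pmatrix}.
\]
Every bi-infinite edge path in the graph of $C$ alternates between $R$-edges and $S$-edges, so $\underline{X}_C$ decomposes as a disjoint union $X_R \sqcup X_S$ according to the type of the edge at position $0$. The square $\sigma_C^2$ preserves each component, and grouping consecutive pairs of edges into a single $A$- or $B$-edge gives conjugacies $(\sigma_C^2, X_R) \cong (\sigma_A, \underline{X}_A)$ and $(\sigma_C^2, X_S) \cong (\sigma_B, \underline{X}_B)$. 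Since $\sigma_C$ carries $X_R$ bijectively onto $X_S$ and commutes with its own square, transporting along these conjugacies delivers the desired shift-intertwining homeomorphism $\underline{X}_A \to \underline{X}_B$.

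For the forward direction, suppose $h \colon \underline{X}_A \to \underline{X}_B$ is a conjugacy. By the Curtis--Hedlund--Lyndon theorem, both $h$ and $h^{-1}$ are sliding block codes of some finite window size. Passing simultaneously to higher block presentations of the two SFTs reduces the problem to the case where $h$ is a $1$-block code, that is, determined by a map on single symbols. Importantly, the passage from $A$ to its $n$-th higher block matrix is itself realised by a finite sequence of elementary strong shift equivalences, whose rectangular factor matrices encode the block-to-edge correspondence; so these preliminary recodings already contribute elementary SSE steps to the chain I wish to construct. It then remains to prove the theorem when $h$ is a $1$-block conjugacy between edge shifts of graphs.

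The heart of the argument---and the main obstacle---is the decomposition theorem: every $1$-block conjugacy between edge shifts factors as a finite composition of in-splittings, out-splittings, and their inverses (amalgamations). Each such move is manifestly an elementary strong shift equivalence: one simply reads off the rectangular factor matrices $R$ and $S$ from the splitting data, exactly as illustrated in \cref{ex:in-split}. Composing these elementary equivalences along the decomposition of $h$ yields the desired strong shift equivalence between $A$ and $B$. Establishing the decomposition theorem is the delicate combinatorial part of Williams' original analysis, requiring one to track how local symbol identifications can be resolved through graph moves; once it is in hand, the remaining bookkeeping is essentially formal.
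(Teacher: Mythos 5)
Your proposal is correct and follows essentially the same route as the paper, which likewise reduces the hard direction to Williams' decomposition of an arbitrary conjugacy into splittings and amalgamations (citing \cite[Proof of Theorem 7.2.7]{Lind-Marcus}) and notes that each such move is an elementary strong shift equivalence. Your bipartite-matrix argument for the converse is the standard one, and your deferral of the decomposition theorem matches the level of detail in the paper's own treatment.
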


For the general definition and notation of in-splits, we refer the reader to~\cite[Section 2.4]{Lind-Marcus} (or~\cite[Section 4]{Bates-Pask2004} for general graphs).
The proof of the classification theorem (see e.g.~\cite[Proof of Theorem 7.2.7]{Lind-Marcus}) is really a matter of decomposing an arbitrary conjugacy 
into a composition of elementary conjugacies coming from splittings such as the in-split above
and the dual notion of out-splits (see~\cref{ex:out-split} below) which also produces a strong shift equivalence.
As such, the theorem says that conjugacies can only arise by a sequence of splittings.

\begin{example}\label{ex:SE-for-all-k}
  Let $k\in \N$ and consider the matrices 
  \[
    A =
    \begin{pmatrix}
      1 & k \\
      k-1 & 1 
    \end{pmatrix}, \qquad \textrm{and} \qquad
    B = 
    \begin{pmatrix}
      1 & k(k-1) \\
      1 & 1 
    \end{pmatrix}.
  \]
  For $k=3$ the matrices are strong shift equivalent (in seven steps, cf.~\cite[Example 7.3.12]{Lind-Marcus}), but for $k\geq 4$ this is still an open problem.
  This example shows that the complexity of strong shift equivalence is not just in the dimensions of the adjacency matrices
  but in the number of elementary strong shift equivalences connecting the matrices as well as the sizes of these connecting matrices.
\end{example}

\begin{remark}
  Nasu~\cite{Nasu1986} (see also~\cite{Hamachi-Nasu1988}) has generalised the notion of strong shift equivalent to cover so-called representation matrices of sofic shifts.
  The paper shows that strong shift equivalence for these representation matrices coincides with conjugacy of the two-sided sofic systems. 
\end{remark}

\subsection{Shift equivalence} \label{sec:shift-equivalence}
Seeing that strong shift equivalence is hard to determine, Williams also introduced shift equivalence as an a priori weaker algebraic relation that seems easier to compute.

\begin{definition}\label{def:shift-equivalence}
  A pair of square $\N$-matrices $A$ and $B$ are \emph{shift equivalent}
  if there are $\ell \in \N_+$ and rectangular $\N$-matrices $R$ and $S$ such that 
  \begin{equation} 
    A^\ell = RS, \quad B^\ell = SR, \quad AR = RB, \quad BS = SA.
  \end{equation}
\end{definition}

It is not hard to verify the strong shift equivalence implies shift equivalence (which seems to justify the terminology).

If we consider the matrices as linear maps $A\colon Z^{|A|} \to Z^{|A|}$ and $B\colon Z^{|B|} \to Z^{|B|}$ 
then the shift equivalence relations are concisely expressed (in the case $\ell = 2$) in the following diagram
\[
\begin{tikzcd}
  \bullet \arrow[r, "A"] \arrow[d, "R" left] & \bullet \arrow[r, "A"] \arrow[d] & \bullet \arrow[r, "A" above] \arrow[d] & \cdots \arrow[r] & \Delta_A \\
  \bullet \arrow[r, "B" below] \arrow[urr, "S"] & \bullet \arrow[r, "B" below] \arrow[urr]& \bullet \arrow[r, "B" below]& \cdots \arrow[r] & \Delta_B \\
\end{tikzcd}
\]
where all the downwards vertical maps are $R$ and the upwards maps are $S$.
Here, $\Delta_A$ is the inductive limit which inherits the order structure $\Delta_A^+$ from the positive order on $\Z^{|A|}$,
and the matrix $A$ induces an automorphism $\delta_A$ on $\Delta_A$ which is essentially multiplication by $A$.
The matrices $R$ and $S$ induce group isomorphisms (that we also denote by $R$ and $S$) between $\Delta_A$ and $\Delta_B$,
and these isomorphisms preserve the order structure and intertwine the automorphisms $\delta_A$ and $\delta_B$.
This means that the module structure $(\Delta_A, \Delta_A^+, \delta_A)$ is an invariant of shift equivalence. 

In fact, the dimension module is a \emph{complete} invariant for shift equivalence.
This was proved by Krieger in~\cite{Krieger1980a,Krieger1980b} (although in the latter paper, the result is only stated for primitive matrices, 
see also~\cite[Theorem 7.5.8]{Lind-Marcus}).

\begin{theorem} 
  Let $A$ and $B$ be square $\N$-matrices.
  Then $(\sigma_A, \underline{X}_A)$ and $(\sigma_B, \underline{X}_B)$ are shift equivalent if and only if 
  the dimension triples $(\Delta_A, \Delta_A^+, \delta_A)$ and $(\Delta_B, \Delta_B^+, \delta_B)$ are isomorphic.
\end{theorem}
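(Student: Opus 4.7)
The plan is to prove the two directions separately, with the forward implication being essentially a bookkeeping exercise on the inductive limit while the backward implication is the substantive content and relies on the interplay of the order structure with the direct limit.

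For the forward direction, suppose $A^\ell = RS$, $B^\ell = SR$, $AR = RB$, $BS = SA$ for rectangular $\N$-matrices $R$ and $S$. I would first observe that $R \colon \Z^{|B|} \to \Z^{|A|}$ and $S\colon \Z^{|A|} \to \Z^{|B|}$ intertwine $A$ and $B$, hence descend to group homomorphisms $R_* \colon \Delta_B \to \Delta_A$ and $S_* \colon \Delta_A \to \Delta_B$ on the inductive limits. The relations $A^\ell = RS$ and $B^\ell = SR$ then give $R_* S_* = \delta_A^\ell$ and $S_* R_* = \delta_B^\ell$ at the level of inductive limits. Since $\delta_A$ and $\delta_B$ are invertible on $\Delta_A$ and $\Delta_B$ (by construction as a direct limit with $A$, $B$ on the connecting arrows), these equalities force $R_*$ and $S_*$ to be mutually inverse isomorphisms up to a power of $\delta_B$. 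Composing with an appropriate power of $\delta_B^{-1}$ yields the desired isomorphism $\varphi$. Order preservation is immediate because $R$ and $S$ have non-negative entries, and intertwining of $\delta_A$ and $\delta_B$ follows from the relations $AR = RB$ and $BS = SA$.

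For the reverse direction, suppose $\varphi \colon (\Delta_A, \Delta_A^+, \delta_A) \to (\Delta_B, \Delta_B^+, \delta_B)$ is an isomorphism of dimension triples. I would construct the matrix $S$ as follows. Let $e_1,\ldots,e_{|A|}$ denote the standard basis vectors of $\Z^{|A|}$, viewed as elements at level $0$ in the inductive limit $\Delta_A$. Each $e_i$ lies in $\Delta_A^+$, so $\varphi(e_i)\in \Delta_B^+$, and therefore can be represented by some non-negative integer vector $v_i \in \N^{|B|}$ at some level $m_i$ in the inductive limit defining $\Delta_B$. By choosing a common level $m$ large enough and using the cofinality of the inductive system (replacing $v_i$ by $B^{m - m_i} v_i$), we may assume all $\varphi(e_i)$ are represented by non-negative integer vectors at the same level $m$. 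The assignment $e_i \mapsto v_i$ defines a non-negative integer matrix $\tilde S$, and I would then take $S = \tilde S$ (possibly after multiplying by a power of $B$, which does not change the class in $\Delta_B$). Symmetrically, apply the procedure to $\varphi^{-1}$ to build $R$. The shift equivalence relations then follow from the two identities $\varphi\circ\varphi^{-1} = \Id$, $\varphi^{-1}\circ\varphi = \Id$, $\varphi\circ \delta_A = \delta_B \circ \varphi$ together with the fact that these equalities hold in the inductive limit, which translates (after going to a sufficiently large common level absorbing all the discrepancies) to matrix identities of the form $A^\ell = RS$, $B^\ell = SR$, $AR = RB$, $BS = SA$ for a suitable $\ell \in \N_+$.

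The main obstacle is the extraction of honest non-negative integer matrices $R$ and $S$ from the abstract isomorphism $\varphi$. The algebraic identities hold only after passing to the direct limit, so one has to be careful to choose the level $\ell$ uniformly across all four shift equivalence equations and across both $R$ and $S$, and one must leverage the positivity of $\varphi$ in an essential way — without the order structure we could at most get rational or signed integer matrices, which would not produce a shift equivalence in the sense of \cref{def:shift-equivalence}. The compatibility of the positive cone with the inductive limit (every element of $\Delta_B^+$ eventually has a representative in $\N^{|B|}$) is precisely what makes the argument work, and identifying this as the single technical lemma underlying the whole theorem is the heart of the proof.
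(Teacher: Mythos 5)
Your proposal is correct and follows the standard route (Krieger; see also Lind--Marcus, Theorem 7.5.8), which is also the only route the survey gestures at: the paper itself proves nothing beyond the forward direction, which it sketches in the paragraph preceding the theorem exactly as you do ($R$ and $S$ descend to order-preserving, $\delta$-intertwining maps on the inductive limits, and $R_*S_*=\delta_A^\ell$, $S_*R_*=\delta_B^\ell$ force them to be isomorphisms), and it cites Krieger for the converse. Your converse is the right argument, and you correctly isolate the key point that positivity of $\varphi$ is what yields $\N$-matrices rather than merely integer ones. The one step worth spelling out is the uniformisation you allude to: equality in the inductive limit only gives identities such as $B^{p}\tilde S A = B^{p+1}\tilde S$ and $A^{q}\tilde R\tilde S = A^{q+n+m}$ after multiplying by powers of the connecting maps (which need not be injective), so one replaces $\tilde S$ by $S=B^{p}\tilde S$ and $\tilde R$ by $R=A^{p}\tilde R$ for a single sufficiently large $p$; then $SA=BS$, $AR=RB$ hold on the nose, and commuting powers through via these relations gives $RS=A^{2p+n+m}$ and $SR=B^{2p+n+m}$ with the \emph{same} exponent $\ell=2p+n+m$, as required by the definition of shift equivalence.
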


\begin{example}
  The matrices in~\cref{ex:SE-for-all-k} are shift equivalent for all $k\geq 3$.
  They are in fact similar (over $\Z$) and for primitive matrices this implies shift equivalent; 
  explicitly for $k=3$ the matrices 
  \[
    R = 
    \begin{pmatrix}
      8 & 3 \\
      1 & 16
    \end{pmatrix}, \qquad \textrm{and} \qquad
    S = 
    \begin{pmatrix}
      2 & 3 \\
      1 & 1
    \end{pmatrix}
  \]
  define a shift equivalence between $A$ and $B$.
\end{example}

Williams motivation for introducing shift equivalence was in part because it was \emph{more computable} than strong shift equivalence.
This intuition was formalised by Kim and Roush in~\cite{Kim-Roush1988} (see also~\cite{Kim-Roush1979})
where they showed that shift equivalence is decidable.
In~\cite[Theorem F]{Williams1973}, it is claimed that shift equivalence coincides with strong shift equivalence and therefore the classification problem for shifts of finite type should seemingly be resolved. 
Alas.
Parry found a mistake in the proof and the question of whether shift equivalence implies strong shift equivalence came to be known as the \emph{Williams problem},
the \emph{Shift equivalence problem}, or the \emph{Williams conjecture}.

Fast forward some twenty years, Kim and Roush exhibited a two-component counterexample to the Williams problem in~\cite{Kim-Roush1992},
and all hope to salvage the equality of shift equivalence and strong shift equivalence was extinguished a few years later
when they constructed an irreducible example.

\begin{example} \label{ex:Kim-Roush}
  The (in)famous primitive counterexample to the Williams problem by Kim and Roush~\cite[Section 7]{Kim-Roush1999} is given by the matrices 
  \[
    A =
    \begin{pmatrix}
    0 & 0 & 1 & 1 & 3 & 0 & 0 \\
    1 & 0 & 0 & 0 & 3 & 0 & 0 \\
    0 & 1 & 0 & 0 & 3 & 0 & 0 \\
    0 & 0 & 1 & 0 & 3 & 0 & 0 \\
    0 & 0 & 0 & 0 & 0 & 0 & 1 \\
    1 & 1 & 1 & 1 & 10 & 0 & 0 \\
    1 & 1 & 1 & 1 & 0 & 1 & 0 \\
  \end{pmatrix} \qquad \textrm{and} \qquad
    B = 
    \begin{pmatrix}
    0 & 0 & 1 & 1 & 3 & 0 & 0 \\
    1 & 0 & 0 & 0 & 0 & 0 & 0 \\
    0 & 1 & 0 & 0 & 0 & 0 & 0 \\
    0 & 0 & 1 & 0 & 0 & 0 & 0 \\
    0 & 0 & 0 & 0 & 0 & 0 & 1 \\
    4 & 5 & 6 & 3 & 10 & 0 & 0 \\
    4 & 5 & 6 & 3 & 0 & 1 & 0 \\
  \end{pmatrix}.
  \]
  In the paper, an explicit shift equivalence is given with \emph{integer} matrices,
  and for primitive matrices this implies shift equivalence with $\N$-matrices.
  Kim and Roush provide conditions that shift equivalent matrices must satisfy (based on the complicated \emph{sign-gyration} invariant) to not be strong shift equivalent
  (and thereby be counterexamples to the Williams problem).
  The bulk of the problem is then to find concrete examples satisfying those conditions; 
  indeed they remark that it \emph{was difficult for us}.
\end{example}
 
The paper of Kim and Roush~\cite{Kim-Roush1999} provide abstract conditions that counterexamples to the Williams problem must satisfy,
and the concrete examples of Kim and Roush show that the conditions are meaningful.
However, the precise relationship between shift equivalence and strong shift equivalence, in particular how one relation \emph{refines} the other,
seems to still be a mystery.

Since the classification of two-sided shifts of finite type is very complicated 
it is useful to have computable invariants to distinguish certain systems apart.
Bowen and Franks~\cite{Bowen-Franks} studied the two groups 
\begin{equation} \label{eq:Bowen-Franks}
  \ker(\Id - A), \qquad \textrm{and}\qquad   \BF(A) = \Z^{|A|}/ (\Id-A)\Z^{|A|},
\end{equation}
where $\Id -A$ is viewed as a map on $\Z^{|A|}$, and showed that they are invariants of conjugacy.
Today, we usually only refer to the cokernel as the \emph{Bowen--Franks group}.
The groups actually arose in their study of flow equivalence, cf.~\cref{sec:flow-equivalence}.
For example, the matrices in~\cref{ex:in-split} have Bowen--Franks group $\Z/2\Z$, and the Kim--Roush example~\cref{ex:Kim-Roush} have Bowen--Franks group isomorphic to $\Z/99 \Z$.

\subsection{C*-algebras} \label{sec:two-sided-conjugacy-C*}
Cuntz and Krieger introduced their C*-algebras from irreducible shifts of finite type (actually satisfying a technical condition (I)),
and they were already aware~\cite[Theorem 4.1]{Cuntz-Krieger1980} that the stabilised C*-algebra (together with the diagonal subalgebra) is an invariant of two-sided conjugacy.
In fact, they showed that the pair is invariant of flow equivalence, cf.~\cref{sec:flow-equivalence}.

Using the Pimsner--Voiculescu sequence, 
Cuntz~\cite[Proposition 3.1]{Cuntz1981b} identified the $K$-theory of the Cuntz--Krieger algebra $\OO_A$ with the invariants of Bowen and Franks (of the \emph{transposed} matrix $A^t$):
\[
  K_0(\OO_A) \cong \BF(A^t) \qquad \textrm{and} \qquad K_1(\OO_A) \cong \ker(\Id - A^t).
\]
See also~\cite{Cuntz1981a} where $K_0(\OO_A)$ is almost identified.
Strictly speaking, this was done under the hypothesis of Cuntz and Krieger's condition (I), 
but the argument has later been generalised to e.g. graphs, cf.~\cite[Chapter 7]{Raeburn2005}.
This provides strong evidence that the C*-algebras actually remember a lot of the underlying dynamics. 

R\o rdam~\cite{Rordam1995} later classified the simple Cuntz--Krieger algebras (those $\OO_A$ for which $A$ is irreducible and nonpermutation) up to stable isomorphism
by their $K_0$, i.e. by the Bowen--Franks group.
This deep result involved $KK$-theory and Huang's work on flow equivalence~\cite{Huang1994}.
Cuntz then observed that simple Cuntz--Krieger algebras are classified up to *-isomorphism by the Bowen--Franks group together with the class of the unit.

Further evidence is provided by considering the fixed-point algebra of the gauge action $\gamma^A$ on the Cuntz--Krieger algebra $\OO_A$.
This is an approximately finite-dimensional (AF) C*-algebra,
the $K_0$-group of which is identified with the dimension group as an ordered group (again of the transposed matrix $A^t$).
In fact the equivariant $K$-theory of the gauge action coincides with Krieger's dimension triple (the underlying group is the $K_0$-group of the fixed-point algebras).

The theorem below is from Bratteli and Kishimoto~\cite[Corollary 1.5 and Theorem 4.3]{Bratteli-Kishimoto},
and its proof relies on classification of actions on C*-algebras from $K$-theoretic data.
The importance of the matrices being primitive is to ensure that the fixed-point algebra is simple with one-dimensional lattices of traces.

\begin{theorem}\label{thm:Bratteli-Kishimoto} 
  Let $A$ and $B$ be primitive matrices.
  They are shift equivalent if and only if there is a *-isomorphism 
  $\varphi\colon \OO_A\otimes \K \to \OO_B\otimes \K$ satisfying $\varphi\circ (\gamma^A\otimes \Id) = (\gamma^B\otimes \Id) \circ \varphi$.
\end{theorem}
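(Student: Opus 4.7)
The plan is to translate the equivariant statement about $(\OO_A,\gamma^A)$ into a statement about the fixed-point AF algebra $F_A = \OO_A^{\gamma^A}$ equipped with its canonical shift automorphism $\alpha_A$, and then invoke existing classification theory for automorphisms of simple AF algebras. When $A$ is primitive, $F_A$ is a simple AF algebra with unique tracial state, and $K_0(F_A)$ with its positive cone is exactly the dimension group $\Delta_A$ (of $A^t$); moreover, the dimension-group automorphism $\delta_A$ coincides with the $K_0$-automorphism induced by $\alpha_A$. Equivalently, $\alpha_A$ generates the dual $\Z$-action on $\OO_A \rtimes_{\gamma^A} \T$, which under Takai duality is stably isomorphic to $F_A$.

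I would first handle the easier direction. Assume $\varphi\colon \OO_A \otimes \K \to \OO_B \otimes \K$ intertwines the gauge actions. Restriction to the fixed-point subalgebras gives a $*$-isomorphism $F_A \otimes \K \to F_B \otimes \K$, and the Takai-dual picture makes this restriction intertwine $\alpha_A \otimes \Id$ with $\alpha_B \otimes \Id$. Applying $K_0$ with its order then yields an ordered-group isomorphism $\Delta_A \to \Delta_B$ intertwining $\delta_A$ and $\delta_B$, so Krieger's theorem (stated above in \cref{sec:shift-equivalence}) concludes that $A$ and $B$ are shift equivalent.

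For the converse, assume shift equivalence. Krieger's theorem supplies an isomorphism of dimension triples $(\Delta_A, \Delta_A^+, \delta_A) \cong (\Delta_B, \Delta_B^+, \delta_B)$. Viewing these as the $K_0$-data of $(F_A, \alpha_A)$ and $(F_B, \alpha_B)$, I would apply Bratteli and Kishimoto's classification of automorphisms of simple AF algebras with unique trace to lift the abstract $K_0$-isomorphism to an honest $*$-isomorphism $F_A \otimes \K \to F_B \otimes \K$ intertwining $\alpha_A \otimes \Id$ with $\alpha_B \otimes \Id$ (up to cocycle equivalence, absorbed after stabilisation). Finally, taking the crossed product by $\Z$ and invoking Takai duality in reverse recovers $\OO_A \otimes \K$ with $\gamma^A \otimes \Id$ on each side, and functoriality of the crossed-product construction yields the desired gauge-equivariant stable $*$-isomorphism.

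The main obstacle is this last step of the forward direction, namely the lift from abstract $K$-theoretic data to a genuine equivariant $*$-isomorphism. This is precisely what Bratteli--Kishimoto's classification achieves, and it is highly nontrivial: it depends crucially on primitivity of $A$ and $B$ (ensuring simplicity of $F_A$, $F_B$ and a one-dimensional trace simplex), and on $\alpha_A$, $\alpha_B$ being outer and scaling the trace by the Perron eigenvalue in a controlled way. A secondary technical point is careful bookkeeping of conventions (transposes, left versus right actions, and the precise identification of $K_0(F_A)$ with $\Delta_A$) so that the $K_0$-automorphism induced by $\alpha_A$ matches $\delta_A$ and not, e.g., its inverse or a twist by a power of the Perron eigenvalue.
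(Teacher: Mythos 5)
Your proposal is correct and follows essentially the same route the paper indicates: the easy direction via the identification of the equivariant $K$-theory of the gauge action with Krieger's dimension triple, and the hard direction via Bratteli and Kishimoto's classification of trace-scaling automorphisms of simple stable AF algebras applied to the gauge fixed-point algebra (equivalently its Takai dual), with primitivity ensuring simplicity and a unique trace. The paper does not spell out a proof but defers to \cite{Bratteli-Kishimoto}; your sketch fills in exactly that outline.
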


\begin{remark}
  It is not known to which extend such a characterisation holds.
  The issue is really going \emph{from} shift equivalent matrices \emph{to} equivariantly *-isomorphic C*-algebras;
  the other direction follows from the identification of the $K$-theory with the dimension triple.
  Recently, Eilers and Szab\'o have announced that they can generalise the result to cover irreducible and nonpermutation matrices.
\end{remark}

Quite surprisingly, it is possible to characterise strong shift equivalence (or, equivalently, two-sided conjugacy of shifts of finite type) using C*-algebras.
Matsumoto~\cite[Theorem 2.18]{Matsumoto2017-circle} made progress on this for irreducible and nonpermutation matrices
and characterised it in terms of Morita equivalence of the triple $(\OO_A, C(X_A), \gamma^A)$, the Cuntz--Krieger algebra, its diagonal subalgebra, and the canonical gauge action.
Below we record a slightly different characterisation from~\cite[Theorem 5.1]{Carlsen-Rout2017};
this result has no irreducibility conditions.

\begin{theorem} \label{thm:Carlsen-Rout}
  Let $A$ and $B$ be square nonnegative $\N$-matrices with no zero rows or columns.
  The two-sided shifts $(\sigma_A, \underline{X}_A)$ and $(\sigma_B, \underline{X}_B)$ are conjugate  
  if and only if there exists a *-isomorphism $\varphi\colon \OO_A\otimes \K \to \OO_B\otimes \K$ satisfying
  $\varphi(C(X_A)\otimes c_0) = C(X_B)\otimes c_0$ and $\varphi\circ (\gamma^A_z\otimes \Id) = (\gamma^B_z\otimes \Id)\circ \varphi$ for all $z\in \T$.
\end{theorem}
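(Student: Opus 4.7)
The plan is to prove both implications through the Deaconu--Renault groupoid model of $\OO_A$ and, for the harder direction, Renault's reconstruction theorem (\cref{thm:Renault-reconstruction}).

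For the forward direction, Williams' theorem reduces a two-sided conjugacy to a finite chain of elementary strong shift equivalences, so it suffices to handle a single step $A = RS$, $B = SR$. Given a Cuntz--Krieger family $\{s_\alpha\}$ for $A$, I would use the combinatorial data encoded by $R$ and $S$ to assemble a Cuntz--Krieger family $\{t_\beta\}$ for $B$ inside $\OO_A \otimes \K$, exploiting the matrix units in $\K$ to keep track of the copies produced by the splitting. The universal property of $\OO_B$ then yields a *-homomorphism $\OO_B \to \OO_A \otimes \K$, and a symmetric construction using $A = RS$ gives a map in the opposite direction; after stabilising, these assemble into mutually inverse *-homomorphisms and hence the desired *-isomorphism. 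Diagonal preservation and gauge equivariance are transparent from the construction, since each $t_\beta t_\beta^*$ is a sum of terms of the form $s_\mu s_\mu^* \otimes e_{ii}$ and each $t_\beta$ has length one in the $s_\alpha$.

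For the backward direction, identify $\OO_A \otimes \K$ with $C^*_r(G_{\sigma_A} \times \mathcal{R}_\N)$, where $\mathcal{R}_\N$ is the full equivalence relation on $\N$; under this identification the diagonal is $C(X_A) \otimes c_0$ and the gauge action corresponds to the canonical $\Z$-valued cocycle $c_A$ on $G_{\sigma_A} \times \mathcal{R}_\N$. The stabilised groupoid is second countable, Hausdorff, \'etale, with abelian torsion-free isotropy, so \cref{thm:Renault-reconstruction} applies and promotes $\varphi$ to a topological groupoid isomorphism $\Psi\colon G_{\sigma_A} \times \mathcal{R}_\N \to G_{\sigma_B} \times \mathcal{R}_\N$; gauge equivariance forces $c_B \circ \Psi = c_A$. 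From such a cocycle-preserving groupoid isomorphism I would extract an \emph{eventual conjugacy} at the one-sided level: continuous functions and a homeomorphism that, after truncating a bounded number of coordinates, intertwine the two shift maps. Passing to natural extensions (projective limits) then yields a shift-commuting homeomorphism $\underline{X}_A \to \underline{X}_B$.

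The main obstacle sits in this backward direction, specifically in turning the abstract cocycle-preserving isomorphism of stabilised groupoids into a concrete two-sided conjugacy. The stabilisation by $\mathcal{R}_\N$ is exactly what loosens one-sided conjugacy into one-sided \emph{eventual} conjugacy, and recovering enough ``past'' information from the cocycle data to define a homeomorphism of the bi-infinite sequence spaces, rather than merely of the one-sided spaces, is where the real technical work resides; one has to show that the delays supplied by the cocycle assemble coherently into a bi-infinite intertwining map, which is essentially the contribution of \cite{Carlsen-Rout2017}.
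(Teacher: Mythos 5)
The paper does not reprove this theorem; it quotes it from \cite[Theorem 5.1]{Carlsen-Rout2017}, so your proposal has to be measured against that proof. Your backward direction contains a step that cannot work as stated. You propose to extract from the cocycle-preserving groupoid isomorphism $\Psi\colon G_{\sigma_A}\times\mathcal{R}_\N\to G_{\sigma_B}\times\mathcal{R}_\N$ an \emph{eventual conjugacy at the one-sided level} and only then pass to natural extensions. First, $\Psi$ restricts on unit spaces to a homeomorphism $X_A\times\N\to X_B\times\N$ which has no reason to respect the $\N$-coordinate, so it does not hand you any homeomorphism $X_A\to X_B$ to begin with. Second, and decisively, no such one-sided eventual conjugacy need exist: by \cref{thm:Matsumoto-eventual-conjugacy}, a Matsumoto eventual conjugacy is equivalent to a diagonal-preserving, gauge-equivariant isomorphism of the \emph{unstabilised} algebras, and the strong shift equivalent pair in \cref{sec:continuous-orbit-equivalence} with $\OO_A\cong\OO_3$ and $\OO_B\cong\OO_3\otimes M_2$ is two-sided conjugate (hence satisfies the stabilised hypothesis of the theorem by the forward direction) while the unstabilised algebras are not even isomorphic, since the classes of the units in $K_0$ differ. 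If your pipeline worked, it would therefore prove a false statement. Your closing remark also inverts the picture: stabilisation is not what relaxes one-sided conjugacy to eventual conjugacy (that relaxation comes from passing from the family of generalised gauge actions to the single canonical one); stabilisation is what relaxes Matsumoto eventual conjugacy to two-sided conjugacy. The proof in \cite{Carlsen-Rout2017} goes directly from the cocycle-preserving isomorphism of the stabilised groupoids to a two-sided conjugacy --- the $\K$-factor is precisely the bookkeeping device that records the finitely many negative coordinates a sliding block code depends on --- and it never factors through any one-sided relation between $X_A$ and $X_B$.

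Two smaller points on the forward direction. Decomposing the conjugacy via Williams' theorem is a legitimate alternative for shifts of finite type, but it is not the route of \cite{Carlsen-Rout2017}, which builds the groupoid isomorphism directly from the block code of the conjugacy; that is why the argument extends to arbitrary subshifts in \cite[Theorem 7.5]{Brix-Carlsen2020b}, where no Williams decomposition is available. Moreover, gauge equivariance of a single elementary step is not ``transparent'': for a general elementary strong shift equivalence the natural generators of $\OO_B$ are words of length two in the bipartite linking algebra built from $R$ and $S$, and the gauge action of that linking algebra restricts to the two corners as the \emph{square} of $\gamma^A$ and of $\gamma^B$ respectively; untangling this (or restricting to in- and out-splits, where the bookkeeping genuinely is clean and the relevant equivariance is established in \cite{Bates-Pask2004} and \cite{Eilers-Ruiz}) is a real step that your phrase ``each $t_\beta$ has length one in the $s_\alpha$'' glosses over.
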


A similar theorem holds for general subshifts (not necessarily of finite type) by~\cite[Theorem 7.5]{Brix-Carlsen2020b}.

\begin{remark}
It is curious that shift equivalence seems to be harder to encode into C*-algebras than strong shift equivalence.
Perhaps this follows from the fact that the former relation is not an orbit equivalence
and so groupoid techniques do not seem fit for the task.
\end{remark}

\section{One-sided conjugacy}
A conjugacy of one-sided shifts lifts to a conjugacy of their natural extensions (the correponding two-sided shifts)
but the converse need not be the case.
In fact, the conjugacy question for noninvertible systems turns out to be starkly different from that of the invertible systems.
In his influencial paper~\cite{Williams1973}, Williams devises an algorithm to solve the conjugacy problem for one-sided shifts of finite type
and observes \emph{almost as a by-product we obtain a classification of 1-sided shifts, up to topological conjugacy}.

\subsection{Williams' amalgamation algorithm}
The in-spliting process of~\cref{ex:in-split} has a dual notion called \emph{out-splitting}.

\begin{example}[Out-splitting] 
  \label{ex:out-split}
  Consider the graph below with adjacency matrix $A$
  \[
  \begin{tikzcd}
    \bullet \arrow[loop, "e" above] \arrow[r, bend left, "f" above] & \bullet \arrow[l, "g" below] \arrow[l, bend left, "h" below]
  \end{tikzcd} \qquad
  A = 
  \begin{pmatrix}
    1 & 1 \\
    2 & 0
  \end{pmatrix}.
  \]
  The left-most vertex $v$ has two edges going \emph{out} ($e$ and $f$),
  and we split $v$ into two vertices (aligned on top of each other below) 
  and distribute the two edges to each copy of $v$;
  the incoming edges $g$ and $h$ are then copied so there is one entering each of the two copies of $v$.
  We then get the graph below with adjacency matrix $B$
  \[
    \begin{tikzcd}
      \bullet \arrow[loop, "e_1" above] \arrow[d, "e_2" left] & \bullet \arrow[l, "g_1" above] \arrow[dl, transform canvas={xshift=0.6ex}, "g_2" below] 
      \arrow[l, bend right, "h_1" above]\arrow[dl, bend left, "h_2" below] \\
      \bullet \arrow[transform canvas={xshift=-0.6ex}, "f_1" above]{ur} &
    \end{tikzcd} \qquad
    C = 
    \begin{pmatrix}
      1 & 1 & 0 \\
      0 & 0 & 1 \\
      2 & 2 & 0
    \end{pmatrix}.
  \]
  Note that the loop $e$ is both incoming and outgoing for $v$ which is why we also have two copies.
  As in~\cref{ex:in-split} it is easy to write down matrices $R$ and $S$ that verify that $A$ and $C$ are strong shift equivalent.

  The reader may verify that an explicit one-sided conjugacy $h\colon X_A \to X_C$ is given as follows:
  any occurrence of $ee$ is mapped to $e^1$, $ef$ is mapped to $e^2$, $f$ is mapped to $f^1$, $ge$ is mapped to $g^1$, $gf$ is mapped to $g^2$,
  $he$ is mapped to $h^1$, and $hf$ is mapped to $h^2$.
  For example,
  \[
    h(eefgefhf \cdots ) = e^1 e^2 f^1 g^1 e^2 f^1 h^2 f^1 \cdots
  \]
\end{example}

For the general definition and notation of out-splits, we refer the reader to~\cite[Section 2.4]{Lind-Marcus} (or~\cite[Section 3]{Bates-Pask2004} for general graphs).
Any out-split produces conjugate graphs.

The inverse operation to an out-split is called an \emph{out-amalgamation} (here we will just say amalgamation) and naturally this operation also preserves the one-sided conjugacy class of the system.
The surprising fact is that these two operations exhaust all conjugacies of one-sided shifts of finite type.

At the level of matrices the amalgamation may be described as follows:
an \emph{amalgamation} of $A$ is obtained by collapsing identical columns of $A$ and adding up the corresponding rows entrywise.
An example is the matrix $C$ in~\cref{ex:out-split} in which the first two columns are identical.
By keeping only one of the columns and adding the first are second rows, we arrive at the matrix $A$. 
The dimension of the amalgamated matrix is then less than the dimension of $A$.
The \emph{total amalgamation} of a square $\N$-matrix $A$ is obtained by iteratively collapsing identical columns and adding corresponding rows
until the resulting matrix contains no identical columns.
Williams showed that this always exists (it is independent of the order of amalagamations) and that it is unique (up to permutation).
The classification for one-sided shifts of finite type is from~\cite[Theorem G]{Williams1973}. 

\begin{theorem} \label{thm:Williams-one-sided}
  Any square $\N$-matrix admits a unique total amalgamation (up to permutation),
  and if $A$ and $B$ are square $\N$-matrices, then their one-sided shifts are conjugate if and only if 
  the total amalgamations of $A$ and $B$ agree (up to permutation).
\end{theorem}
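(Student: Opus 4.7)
The plan is to establish the theorem in three stages: first handle existence, termination, and uniqueness of the total amalgamation on the purely matrix-theoretic side, and then prove the two directions of the conjugacy characterisation. Existence and termination are immediate, since a single amalgamation step strictly reduces the dimension of the matrix, so any sequence of amalgamations terminates in at most $|A|$ steps at a matrix with no two identical columns.

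For uniqueness I would introduce the \emph{follower equivalence} $\approx$ on the vertex set of the graph $G_A$ represented by $A$: declare $v \approx w$ if and only if, for every $n \in \N$, the multisets of length-$n$ paths leaving $v$ and $w$ coincide after identification of their terminal vertices modulo $\approx$. This is the greatest fixed point of the refinement induced by one amalgamation step. I would then argue (i) a single amalgamation step never collapses more than $\approx$ permits, (ii) a matrix has no two identical columns precisely when $\approx$ restricts to the identity on its vertex set, and (iii) consequently any maximal sequence of amalgamations must terminate at the quotient graph $G_A/\approx$, which is canonically determined up to permutation of vertex labels. This confluence argument is essentially a Church--Rosser style verification: any two amalgamation sequences can be refined to a common one because both simply approximate the fixed equivalence $\approx$ from above.

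For the direction ``same total amalgamation $\Rightarrow$ one-sided conjugate'', I would observe that each amalgamation step is the inverse of an out-split, and by~\cref{ex:out-split} an out-split realises an explicit one-sided conjugacy. Chaining the two amalgamation sequences from $A$ and from $B$ to their common total amalgamation $C$ then produces one-sided conjugacies $X_A \cong X_C \cong X_B$.

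For the reverse direction I would invoke the Curtis--Hedlund--Lyndon theorem to represent a one-sided conjugacy $h\colon X_A \to X_B$ as a sliding block code; since only future information is available in the one-sided setting, $h$ has the form $h(x)_i = \Phi(x_{[i,i+n]})$ for some anticipation $n \in \N$. Passing to the $n$-th higher block representation of $A$ (a finite sequence of out-splits, so preserving both the one-sided conjugacy class and the total amalgamation by the previous stages) turns $h$ into a $1$-block code, and a bijective $1$-block code is nothing but an edge relabelling arising from a graph isomorphism. Hence the higher-block version of $A$ is isomorphic (up to vertex permutation) to $B$, and because out-splits do not alter the total amalgamation, the total amalgamations of $A$ and $B$ must coincide. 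The main obstacle I expect is the uniqueness step: one must verify carefully that the inductive definition of $\approx$ is well-founded and that any two sequences of amalgamations converge to the same quotient regardless of which identical columns are collapsed at each stage; a secondary technical point is to confirm that the $n$-th higher block construction really is a composition of out-splits, which is what allows the total amalgamation to be treated as an invariant in the final step.
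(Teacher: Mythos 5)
Your termination argument and the implication ``same total amalgamation $\Rightarrow$ one-sided conjugate'' are fine: each amalgamation is the inverse of an out-split, and out-splits give explicit one-sided conjugacies as in \cref{ex:out-split}. (The survey itself offers no proof of this theorem --- it is quoted from Williams' Theorem~G --- so the comparison is with the standard argument.) The uniqueness step, however, does not work as proposed. First, your equivalence points the wrong way: with the conventions of this paper an amalgamation collapses vertices with identical \emph{columns}, i.e.\ identical \emph{incoming} edge patterns, whereas your $\approx$ compares paths \emph{leaving} $v$ and $w$. On the matrix $B$ of \cref{ex:in-split} the first two rows are identical, so the partition $\{\{1,2\},\{3\}\}$ is a fixed point of your refinement operator and $G_B/\approx$ is the $2\times 2$ matrix $A$; but $B$ has no identical columns, hence is its own total amalgamation, and indeed $X_A\not\cong X_B$ one-sidedly. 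Second, even after transposing, the greatest fixed point does not compute the total amalgamation: for the $2\times 2$ identity matrix the full relation is a fixed point in either direction, yet that matrix has no identical columns or rows, and its shift (two points) is not conjugate to $X_{(1)}$ (one point). So your claims (ii) and (iii) are false; the correct route is Newman's lemma --- prove \emph{local} confluence directly by checking that collapsing one group of identical columns leaves every other group of identical columns identical, then combine with termination.

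The converse direction also has a genuine gap. Passing to a higher block presentation makes $h$ a $1$-block code but does not make $h^{-1}$ $1$-block, and a \emph{bijective} $1$-block code between edge shifts is emphatically not an edge relabelling induced by a graph isomorphism: the inverse of the out-split conjugacy in \cref{ex:out-split} is a bijective $1$-block code from a $3$-vertex graph onto a $2$-vertex graph. Determining which bijective $1$-block codes occur --- precisely the compositions of out-amalgamations with graph isomorphisms --- is the heart of Williams' decomposition theorem for one-sided conjugacies, and your sketch assumes it away at the words ``is nothing but.'' You would also need to verify separately that out-splits and out-amalgamations do not change the total amalgamation, which itself relies on the uniqueness statement you have not yet secured.
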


Observe that in~\cref{ex:out-split} above, $A$ has no identical columns, so it is its own total amalgamation.
However, the first two columns of $C$ are identical and the total amalgamation of $C$ coincides with $A$.
Conversely, both of the graphs in the in-split example (\cref{ex:in-split}) contain no identical columns, and they are distinct,
so this shows that the one-sided shifts $X_A$ and $X_B$ are not one-sided conjugate (though they are two-sided conjugate).

\begin{remark}
For graphs with finitely many vertices (but potentially infinitely many edges) a generalisation of Williams' algorithm works to show that conjugacy for such graphs is decidable.
This result will appear in forthcoming work~\cite{ABCEW}.
\end{remark}

\subsection{General C*-description for local homeomorphisms}
We saw in~\cref{sec:two-sided-conjugacy-C*} that Cuntz and Krieger showed that $\OO_A$ is an invariant of the two-sided shift of finite type $(\sigma_A, \underline{X}_A)$
\emph{up to stable isomorphism}.
Above we also saw that one-sided conjugacy is completely classified using out-splits and amalgamations. 
In~\cite{Bates-Pask2004}, Bates and Pask described how to generalise Williams' splitting operations to possibly infinite graphs,
and they observed that in-splits produce graph C*-algebras that are stably isomorphic
while out-splits produce graph C*-algebras that are honestly *-isomorphic.
From~\cref{thm:Williams-one-sided}, we then know that $\OO_A$ is really an invariant of \emph{one-sided conjugacy}.
This was in fact already observed by Cuntz and Krieger, cf.~\cite[Proposition 2.17]{Cuntz-Krieger1980},
and the point is also made explicit by Katsura~\cite{Katsura2009}.

The main result in~\cite{Brix-Carlsen2020a} is a converse to~\cite[Proposition 2.17]{Cuntz-Krieger1980}
in which one-sided conjugacy of shifts of finite type is characterised in terms of diagonal-preserving *-isomorphism of Cuntz--Krieger algebras 
that intertwine a certain completely positive map.
Working with irreducible shifts of finite type in~\cite{Matsumoto2022}, Matsumoto dispensed with the completely positive map and instead required that the *-isomorphism intertwine a whole family a gauge actions.
The following theorem from~\cite[Theorem 3.1]{Armstrong-Brix-Carlsen-Eilers} is the appropriate generalisation of Matsumoto's idea to general local homeomorphisms.
It illustrates an example of a connection between general dynamical systems in the form of local homeomorphisms and their C*-algebras
that is directly inspired by shifts of finite type.

\begin{theorem} 
  Let $T\colon X\to X$ and $S\colon Y\to Y$ be local homeomorphisms.
  Then $T$ and $S$ are conjugate if and only there is a *-isomorphism $\varphi\colon C^*(T) \to C^*(S)$ satisfying 
  $\varphi(C_0(X)) = C_0(Y)$ (implemented by a homeomorphism $h\colon X\to Y$), and
  \[
    \varphi\circ \gamma_t^{X,f} = \gamma_t^{Y,f\circ h^{-1}}\circ \varphi,
  \]
  for all $t\in \R$ and $f\in C_0(X, \R)$.
\end{theorem}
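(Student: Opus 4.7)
The plan is to proceed through Renault's reconstruction theorem and then upgrade the resulting abstract groupoid isomorphism to a genuine conjugacy by decoding the dynamics from the cocycle data carried by the generalized gauge actions $\gamma^{X,f}$.

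The forward direction is routine: a conjugacy $h\colon X\to Y$ lifts to a topological groupoid isomorphism $\Psi\colon G_T\to G_S$ by $(x,p,y)\mapsto (h(x),p,h(y))$, which induces a $*$-isomorphism $\varphi$ of the groupoid C*-algebras sending $C_0(X)$ to $C_0(Y)$ via pullback along $h^{-1}$. The $\R$-valued cocycle $c_f^T\colon G_T\to\R$ generating $\gamma^{X,f}$ has the form
\[
  c_f^T(x,\,m-n,\,y) \;=\; \sum_{k=0}^{m-1} f(T^k x) - \sum_{k=0}^{n-1} f(T^k y), \qquad\text{whenever } T^m x = T^n y,
\]
and since $h\circ T = S\circ h$, a direct verification yields $c_f^T = c_{f\circ h^{-1}}^S\circ \Psi$, which is equivalent to the stated intertwining of the $\R$-actions.

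For the converse, I would first apply \cref{thm:Renault-reconstruction}: the hypothesis that $\varphi(C_0(X))=C_0(Y)$, combined with the fact that the interior of the isotropy of $G_T$ is $\Z$-valued (hence abelian and torsion-free), produces a topological groupoid isomorphism $\Psi\colon G_T\to G_S$ restricting on unit spaces to the given $h$. The goal is then to show $\Psi(x,1,Tx) = (h(x),1,h(Tx))$ for every $x\in X$, whence $h\circ T = S\circ h$. The $\R$-action $\gamma^{X,f}$ on $C_c(G_T)$ is pointwise multiplication by $e^{itc_f^T}$, so the intertwining $\varphi\circ\gamma_t^{X,f} = \gamma_t^{Y,f\circ h^{-1}}\circ\varphi$ at every $t\in\R$ translates, via the correspondence $\varphi\leftrightarrow \Psi$, to the pointwise cocycle identity $c_f^T = c_{f\circ h^{-1}}^S\circ \Psi$ on all of $G_T$, valid for every $f\in C_0(X,\R)$.

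Now fix $\eta = (x,p,y)\in G_T$ with $T^m x = T^n y$ and $m-n=p$, and write $\Psi(\eta) = (h(x),q,h(y))$ with $S^{m'}h(x) = S^{n'}h(y)$ and $m'-n'=q$. The cocycle identity becomes, for every $f\in C_0(X,\R)$,
\[
  \sum_{k=0}^{m-1} f(T^k x) + \sum_{k=0}^{n'-1} f\bigl(h^{-1}S^k h(y)\bigr) \;=\; \sum_{k=0}^{n-1} f(T^k y) + \sum_{k=0}^{m'-1} f\bigl(h^{-1}S^k h(x)\bigr).
\]
Since $C_0(X,\R)$ separates points, the positive integer-valued measures on $X$ formed by the corresponding sums of Dirac masses must coincide. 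A bump-function comparison of supports, with minimal witnesses $(m,n)$ and $(m',n')$, then forces $m=m'$, $n=n'$, and $T^k x = h^{-1}(S^k h(x))$ (and similarly for $y$) for the relevant $k$; specialising to the bisection element $(x,1,Tx)$ yields $h(Tx) = S(h(x))$, as required.

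The principal obstacle is the last step: extracting pointwise dynamical equalities from the measure identity is sensitive to orbit collisions, since the same point may appear with multiplicity on either side when orbits of $x$ and $y$ overlap or are periodic. The standard remedy is to select the shortest witnesses $(m,n)$ so that the orbit segments $\{T^k x\}_{k<m}$ and $\{T^k y\}_{k<n}$ are disjoint from the common forward trajectory, and then exploit local injectivity of $T$ to isolate individual orbit points by small enough bump functions; that this bookkeeping can be carried out uniformly is where the local-homeomorphism hypothesis on $T$ and $S$ is genuinely used.
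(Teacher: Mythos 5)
Your overall architecture is the same as that of \cite{Armstrong-Brix-Carlsen-Eilers} (and of the sketch this paper gives for the subsequent corollary on subshifts): the forward direction via the induced groupoid isomorphism and a direct cocycle computation, and the converse via \cref{thm:Renault-reconstruction} followed by translating the intertwining of the generalised gauge actions into the pointwise identity $c_f^T = c_{f\circ h^{-1}}^S\circ\Psi$. Up to that point the proposal is sound, modulo the standard care needed to check that the C*-level intertwining really yields the groupoid-level cocycle identity despite the possible twist between $\varphi$ and the isomorphism induced by $\Psi$ (this cancels because the identity is tested against all $t\in\R$).

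The gap is in the final step, and it sits exactly where you flagged it. Specialising to $\eta=(x,1,Tx)$ and writing $u=S(h(x))$ and $v=h(T(x))$, the measure identity reduces to $\sum_{k=0}^{n'-1}g(S^k u)=\sum_{k=0}^{n'-1}g(S^k v)$ for all $g$. If $x$ (hence $h(x)$) is periodic, your proposed remedy --- choosing witnesses whose orbit segments avoid the common forward trajectory and then separating atoms with small bump functions --- cannot be carried out: the minimal witnesses for $(x,1,Tx)$ are forced to be $(1,0)$, the point $x$ lies on its own forward trajectory, and when the period divides $n'$ both sides of the identity are the same multiple of the uniform measure on the cycle, so no choice of $g$ distinguishes $u$ from $v$; local injectivity of $S$ is of no help on a finite cycle. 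The missing ingredient is the constraint $S^{n'}u=S^{n'}v$, which you get for free from the fact that $\Psi(x,1,Tx)$ is a groupoid element of $G_S$ (equivalently, by first applying the hypothesis with $f\equiv 1$ to conclude that $h$ is a Matsumoto eventual conjugacy in the sense of \cref{def:Matsumoto-eventual-conjugacy}). With that constraint the argument closes even at periodic points: if $u\neq v$, the multiset equality forces $u=S^a v$ and $v=S^b u$ with $a,b\geq 1$, so both points lie on a common finite cycle on which $S$ acts bijectively, and then $S^{n'}u=S^{n'}v$ gives $u=v$. This is precisely \cite[Lemma~3.7]{Armstrong-Brix-Carlsen-Eilers}, the lemma the paper invokes in its sketch of the corollary; your write-up needs it (or an equivalent) stated and proved, since it is the real content of the converse direction.
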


When the space is totally disconnected, we may replace $\R$ by $\Z$
(this applies e.g. to (infinite) directed graphs).

Below we specify it to general subshifts.
This has not appeared elsewhere and it complements Matsumoto's work on conjugacy of normal subshifts~\cite[Theorem 1.5]{Matsumoto2021} and~\cite[Theorem 4.4]{Brix-Carlsen2020b}.
We emphasise that the result below applies to all general subshifts.
This class of dynamical systems is not immediately covered by the above theorem because the action is not by local homeomorphisms.
We provide only a sketch of proof; the details are an adaptation of the proof of the above theorem to general subshifts as in~\cite{Brix-Carlsen2020b}.

\begin{corollary} 
  Let $(\sigma_X, X)$ and $(\sigma_Y, Y)$ be general one-sided shift spaces.
  The systems are conjugate if and only if there is a *-isomorphism $\varphi\colon \OO_X \to \OO_Y$ satisfying 
  $\varphi(C(X)) = C(Y)$ (implemented by a homeomorphism $h\colon X\to Y$) and 
  \begin{equation} \label{eq:subshifts-gauge-actions}
    \varphi\circ \gamma_z^{X,f\circ \pi_X} = \gamma_z^{Y, (f\circ h^{-1})\circ \pi_Y}\circ \varphi,
  \end{equation}
  for all $f\in C(X, \Z)$ and $z\in \T$.
  Here, $\pi_X\colon \tilde{X} \to X$ and $\pi_Y\colon \tilde{Y}\to Y$ are the canonical factor maps associated to the systems.
\end{corollary}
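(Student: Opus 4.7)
The plan is to reduce the statement to the preceding theorem for local homeomorphisms, applied to the canonical covers $(\sigma_{\tilde X}, \tilde X)$ and $(\sigma_{\tilde Y}, \tilde Y)$, which are themselves local homeomorphisms. The bridge between the two levels is the functoriality of the cover construction of~\cite{Brix-Carlsen2020b}: any conjugacy $h \colon X \to Y$ lifts uniquely to a conjugacy $\tilde h \colon \tilde X \to \tilde Y$ with $\pi_Y \circ \tilde h = h \circ \pi_X$, and sufficient structure at the level of $\tilde X$ descends through $\pi_X$ to $X$.

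For the forward direction, lift $h$ to $\tilde h$ and apply the local homeomorphism theorem. This yields a $*$-isomorphism $\varphi \colon \OO_X \to \OO_Y$ with $\varphi(C(\tilde X)) = C(\tilde Y)$ (implemented by $\tilde h$) and $\varphi \circ \gamma^{\tilde X, g}_z = \gamma^{\tilde Y, g \circ \tilde h^{-1}}_z \circ \varphi$ for all $g \in C(\tilde X, \Z)$. Restricting to cocycles of the form $g = f \circ \pi_X$ with $f \in C(X, \Z)$, the compatibility $\pi_Y \circ \tilde h = h \circ \pi_X$ gives $(f \circ \pi_X) \circ \tilde h^{-1} = (f \circ h^{-1}) \circ \pi_Y$, which is exactly~\labelcref{eq:subshifts-gauge-actions}. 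The diagonal preservation restricts from $C(\tilde X) \to C(\tilde Y)$ to $C(X) \to C(Y)$, implemented by $h$.

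For the reverse direction, extract the homeomorphism $h \colon X \to Y$ from $\varphi|_{C(X)} \colon C(X) \to C(Y)$; the task then reduces to proving that $h$ is shift-equivariant. Follow the strategy of the local homeomorphism theorem: realise $\OO_X$ and $\OO_Y$ as the groupoid C*-algebras of $G_{\sigma_{\tilde X}}$ and $G_{\sigma_{\tilde Y}}$, and analyse the cocycle gauge actions. For each $f \in C(X, \Z)$ the cocycle on $G_{\sigma_{\tilde X}}$ associated to $f \circ \pi_X$ is given by sums of $f \circ \pi_X$ along range-minus-source orbit segments; allowing $f$ to vary over $C(X, \Z)$ separates the shift structure already at the level of $X$. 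Combined with the preservation of $C(X)$, a Renault-type reconstruction (\cref{thm:Renault-reconstruction}) upgrades $\varphi$ to an isomorphism of the relevant groupoid structures that respects the pulled-back cocycles, from which shift-equivariance of $h$ follows.

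The main obstacle is the reverse direction, since the hypotheses are strictly weaker than those of the local homeomorphism theorem: only $C(X)$, a proper subalgebra of the full diagonal $C(\tilde X)$, is preserved, and only the restricted family of gauge actions parametrised by $C(X, \Z)$ is intertwined. The technical point is that the canonical cover $\tilde X$ and the factor map $\pi_X$ are built \emph{intrinsically} from the subshift, so the combined data of $\varphi$-preservation of $C(X)$ and respect for the $\pi_X$-pullback cocycles determines the cover structure up to conjugacy. Making this precise requires a careful adaptation of the groupoid reconstruction from the case of local homeomorphisms to the setting where $\sigma_X$ itself need not be a local homeomorphism, exactly as carried out in~\cite{Brix-Carlsen2020b}.
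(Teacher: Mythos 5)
Your outline follows the same route as the paper: both directions are pushed to the canonical covers, the forward direction lifts the conjugacy $h$ to $\tilde h$ and restricts the full family of gauge actions to the cocycles pulled back through $\pi_X$, and the reverse direction rests on exactly the "intrinsic" point you identify --- preservation of $C(X)$ alone already upgrades to preservation of the full diagonal $D_X = C(\tilde X)$ (this is \cite[Theorem 3.3]{Brix-Carlsen2020b}), which produces $\tilde h$ with $h\circ \pi_X = \pi_Y\circ \tilde h$.

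The one place where your sketch is too thin to close is the endgame of the reverse direction. Saying that a Renault-type reconstruction "respects the pulled-back cocycles, from which shift-equivariance of $h$ follows" glosses over the actual difficulty: the cocycle data lives on the cover groupoid, and one still has to descend to a statement about $\sigma_X$ on $X$ itself. The paper does this in two concrete stages rather than by an abstract reconstruction. First, the gauge condition for the single function $f=1$ is the hypothesis of the eventual-conjugacy characterisation, so $h$ and $\tilde h$ are Matsumoto eventual conjugacies with some constant $k$. Second, with that $k$ in hand, the condition for general $f\in C(X,\Z)$ unwinds to the identity
\[
  \sum_{i=0}^k f\circ h^{-1}\bigl(\sigma_Y^i(u)\bigr) = \sum_{j=0}^k f\circ h^{-1}\bigl(\sigma_Y^j(v)\bigr),
  \qquad u = \sigma_Y(h(x)),\ v = h(\sigma_X(x)),
\]
valid for all $f$, and a point-separation lemma (\cite[Lemma 3.7]{Armstrong-Brix-Carlsen-Eilers}) then forces $u=v$, i.e.\ $h\circ \sigma_X = \sigma_Y\circ h$. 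Your remark that "allowing $f$ to vary separates the shift structure at the level of $X$" is the right intuition, but without the intermediate eventual-conjugacy constant $k$ and the summation identity there is no concrete statement to which a separation argument can be applied; you should supply that step explicitly.
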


\begin{proof}
  [Sketch of proof]
  Assume there exist a homeomorphism $h\colon X \to Y$ and a *-isomorphism $\varphi\colon \OO_X \to \OO_Y$ satisfying 
  $\varphi(C(X)) = C(Y)$ with $\varphi(f) = f\circ h^{-1}$ for all $f\in C(X)$, and~\labelcref{eq:subshifts-gauge-actions}.
  By~\cite[Theorem 3.3]{Brix-Carlsen2020b}, we have $\varphi(D_X) = D_Y$,
  and this means there is a homeomorphism $\tilde{h}\colon \tilde{X}\to \tilde{Y}$ of the covers satisfying $h\circ \pi_X = \pi_Y\circ \tilde{h}$.
  Applying~\labelcref{eq:subshifts-gauge-actions} for $f=1$, it follows that $h$ and $\tilde{h}$ are one-sided eventual conjugacies (cf.~\cref{def:Matsumoto-eventual-conjugacy} below).
  Let $k\in \N$ be the constant associated to $h$ and $\tilde{h}$ according to the definition.

  Now, fix $x\in X$ and let $\tilde{x}\in \pi_X^{-1}(x) \subset \tilde{X}$.
  The hypothesis~\labelcref{eq:subshifts-gauge-actions} implies (this requires some work) that for any $f\in C(X,\Z)$
  \[
    f(x) = \sum_{i=0}^k f\circ h^{-1}\circ \pi_Y(\sigma^i_{\tilde{Y}}(\tilde{h}(\tilde{x}))) - \sum_{j=0}^k f\circ h^{-1}\circ \pi_Y(\sigma^j_{\tilde{Y}}(\tilde{h}(\sigma_{\tilde{X}}(\tilde{x})))).
  \]
  If we let $u = \sigma_Y(h(x))$ and $v = h(\sigma_X(x))$, then this simplifies to
  \[
    \sum_{i=0}^k f\circ h^{-1}(\sigma_Y^i(u)) = \sum_{j=0}^k f\circ h^{-1}(\sigma_Y^j(v))
  \]
  for all $f\in C(X,\Z)$.
  Now~\cite[Lemma 3.7]{Armstrong-Brix-Carlsen-Eilers} (this lemma is stated for systems of local homeomorphisms but this is not needed) 
  allows us to conclude that $u = v$ and so $h$ is a conjugacy.
\end{proof}

\section{Matsumoto's one-sided eventual conjugacy} \label{sec:Matsumoto-eventual-conjugacy}
The conjugacy problem for one-sided shifts of finite type is understood 
via out-splits and amalgamations, and there is a user-friendly decision procedure to determine when two such systems are conjugate. 
In this section, we discuss Matsumoto's notion of eventual conjugacy~\cite{Matsumoto2017-circle}, its relation to C*-algebras,
and how it relates to strong shift equivalence.
We shall be careful to call this eventual conjugacy \emph{in Matsumoto's sense} because there is an arguably more natural notion of eventual conjugacy 
(which we shall discuss in~\cref{sec:discrepancy}), and the two do not coincide.
Matsumoto's eventual conjugacy was first defined for irreducible shifts of finite type, but here we state it for local homeomorphisms.

\begin{definition}\label{def:Matsumoto-eventual-conjugacy}
  Let $T\colon X \to X$ and $S\colon Y\to Y$ be local homeomorphism.
  A homeomorphism $h\colon X \to Y$ is a \emph{Matsumoto eventual conjugacy} if there are continuous maps $k\colon X \to \N$ and $k'\colon Y\to \N$ satisfying
  \begin{align} 
    S^{k(x) + 1}\circ h(x) &= S^{k(x)}\circ h\circ T(x), \\
    T^{k'(y) + 1}\circ h^{-1}(y) &= T^{k'(y)}\circ h^{-1}\circ S(y),
  \end{align}
  for all $x\in X$ and $y\in Y$.
  When $X$ and $Y$ are compact, we may equivalently choose $k$ and $k'$ to be fixed nonnegative integers. 
\end{definition}

Note that $h$ is a conjugacy if we can choose $k = 0$.

\begin{example}\label{ex:Brix-Carlsen2020a}
The example below from~\cite{Brix-Carlsen2020a} shows that Matsumoto's eventual conjugacy is nontrivial in the sense that it does not coincide with one-sided conjugacy.
Let $X$ and $Y$ be the one-sided shift spaces determined by the graphs 
\[
  \begin{tikzcd}
    & \bullet \arrow[transform canvas={xshift=0.6ex}]{dl} \arrow[transform canvas={xshift=-0.6ex}]{dl} \arrow[transform canvas={xshift=0.6ex}]{dr} \arrow[transform canvas={xshift=-0.6ex}]{dr}& \\
    \bullet \arrow[ur, bend left] & & \bullet \arrow[ul, bend right]
  \end{tikzcd}
  \qquad \textrm{and} \qquad
  \begin{tikzcd}
    & \bullet \arrow{dl} \arrow[transform canvas={xshift=0.9ex}]{dl} \arrow[transform canvas={xshift=1.8ex}]{dl}  \arrow{dr}& \\
    \bullet \arrow[ur, bend left] & & \bullet \arrow[ul, bend right]
  \end{tikzcd}
\]
respectively.
A concrete eventual conjugacy is given as follows: 
label the edges pointing downwards in the first graph from left to right by $a,b,c,d$ and edges pointing upwards $e,f$;
similarly, label the edges of the second graph by $a',b',c',d',e',f'$.
Define $h\colon X \to Y$ by sending an infinite path $x = x_0x_1\cdots$ to $y = y_0y_1\cdots$
where $y_i = e'$ if $x_{i-1} = c$, and $y_i = x_i'$ otherwise. 
The reader may verify that this is a Matsumoto eventual conjugacy.
On the other hand, the two graphs have distinct total amalgamations which can easily be checked so the one-sided systems are not conjugate.
\end{example}

The following C*-algebraic characterisation has its origin in~\cite[Theorem 1.2]{Matsumoto2017-circle} for infinite irreducible shifts of finite type.
It was later developed for graphs in~\cite[Theorem 4.1]{Carlsen-Rout2017},
and found its final form for general local homeomorphisms in~\cite[Theorem 8.10]{CRST}.
The C*-algebras that appear are the groupoid C*-algebras built from the local homeomorphisms as in~\cref{sec:prelim};
for shifts of finite type they are just the Cuntz--Krieger algebras.

\begin{theorem} \label{thm:Matsumoto-eventual-conjugacy}
  Let $T\colon X\to X$ and $S\colon Y\to Y$ be local homeomorphisms.
  Then $T$ and $S$ are are Matsumoto eventually conjugate if and only if there is a *-isomorphism $\varphi\colon C^*(T) \to C^*(S)$ 
  satisfying $\varphi(C_0(X)) = C_0(Y)$ and $\varphi\circ \gamma^T_z = \gamma^S_z\circ \varphi$, for all $z\in \T$.
\end{theorem}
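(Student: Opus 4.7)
The plan is to lift the statement to the level of Deaconu--Renault groupoids as described in \cref{sec:prelim} and then invoke Renault's reconstruction theorem. Since $C^*(T) = C^*(G_T)$ and the gauge action $\gamma^T$ is the dual of the $\Z$-valued cocycle $c_T(x,p,y) = p$, a diagonal-preserving and gauge-equivariant *-isomorphism $\varphi\colon C^*(T) \to C^*(S)$ corresponds, via \cref{thm:Renault-reconstruction} (whose technical hypotheses I would first verify for these groupoids), to a topological groupoid isomorphism $\tilde{h}\colon G_T \to G_S$ with $c_S\circ \tilde{h} = c_T$. The task thus reduces to showing that cocycle-preserving groupoid isomorphisms $G_T \to G_S$ are in natural bijection with Matsumoto eventual conjugacies $X \to Y$.

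For the forward direction, given an eventual conjugacy $h$ with continuous $k, k'$, I would define $\tilde{h}(x,p,y) = (h(x), p, h(y))$. Iterating the relation $S^{k(x)+1}h(x) = S^{k(x)}h(Tx)$ shows that for each $m\in \N$ there exists $K_m(x) \in \N$ with $S^{K_m(x)+m}h(x) = S^{K_m(x)}h(T^m x)$; hence whenever $T^m x = T^n y$, choosing $K$ sufficiently large gives $S^{K+m}h(x) = S^{K}h(T^m x) = S^{K}h(T^n y) = S^{K+n}h(y)$, so $(h(x), m-n, h(y)) \in G_S$. This shows $\tilde{h}$ is well-defined and cocycle-preserving. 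Continuity of $\tilde{h}$ follows from continuity of $h$ and $k$ together with the étale topology on $G_S$, and the inverse is produced symmetrically from $k'$.

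For the backward direction, starting from a cocycle-preserving groupoid isomorphism $\tilde{h}\colon G_T \to G_S$, I would let $h\colon X \to Y$ be its restriction to unit spaces. For each $x \in X$, the element $(x, 1, Tx) \in G_T$ is sent to $(h(x), 1, h(Tx)) \in G_S$, which furnishes integers $m', n' \in \N$ with $m' - n' = 1$ and $S^{m'}h(x) = S^{n'}h(Tx)$; setting $k(x) := n'$ then satisfies the defining identity pointwise.

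The main obstacle is upgrading this pointwise $k$ to a \emph{continuous} function on $X$ (or, in the compact case, a fixed integer). To handle this I would use that $(h(x), 1, h(Tx))$ lies in an open bisection $U \subset G_S$ on which a single pair $(m', n')$ uniformly witnesses the equality $S^{m'}\circ r = S^{n'}\circ s$; this renders $n'$ locally constant in $x$. When $X$ is compact, pulling back a finite subcover of such bisections through the continuous section $x \mapsto (h(x),1,h(Tx))$ and taking the maximum of the finitely many $n'$-values produces a single $k \in \N$ valid uniformly; the symmetric argument applied to $\tilde{h}^{-1}$ yields $k'$, completing the characterisation.
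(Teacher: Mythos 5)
Your proposal follows the same route the paper indicates for this result (it cites the source and sketches precisely this strategy): pass to the Deaconu--Renault groupoids via a cocycle-preserving refinement of \cref{thm:Renault-reconstruction}, and then identify cocycle-preserving groupoid isomorphisms $G_T\to G_S$ with Matsumoto eventual conjugacies. The details you supply --- iterating the defining relation to get $K_m(x)=\sum_{i=0}^{m-1}k(T^ix)$ for the forward direction, and using basic open bisections of $G_S$ on which a single pair $(m',n')$ witnesses the relation to make the pointwise $k$ continuous (with a finite subcover and maximum in the compact case) --- are the standard ones and are correct.
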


The proof encodes an eventual conjugacy into groupoids and then uses the reconstruction theory for topological groupoids with its canonical cocycle;
this is a variation on~\cref{thm:Renault-reconstruction}.
A similar result for general shift spaces can be found in~\cite[Theorem 5.3]{Brix-Carlsen2020b}.

\subsection{Balanced strong shift equivalence}
Now we discuss \emph{balanced strong shift equivalence} as introduced in~\cite[Section 5]{Brix2022}.
This is an equivalence relation on square $\N$-matrices which should be thought of as the one-sided analogue of Williams' strong shift equivalence.

The pair of graphs in~\cref{ex:Brix-Carlsen2020a} inspired a new move on graphs introduced in~\cite{Eilers-Ruiz},
the \emph{balanced in-split}: 
one graph $G$ produces two graphs $E$ and $F$ by performing two in-splits at $G$ in which the vertices are split into \emph{the same} number of new vertices,
but the incoming edges may be distributed \emph{differently}.
The pair of graphs $E$ and $F$ are then said to be a balanced in-split of $G$.

\begin{example}
Consider the graph
\[
  \begin{tikzcd}
    \bullet \arrow[transform canvas={xshift=0.3ex}]{d} \arrow[transform canvas={xshift=0.9ex}]{d} \arrow[transform canvas={xshift=-0.3ex}]{d} \arrow[transform canvas={xshift=-0.9ex}]{d} \\
    \bullet \arrow[u, bend left=50] 
  \end{tikzcd}
\]
The bottom vertex has four incoming edges, and the reader can verify that the two graphs in~\cref{ex:Brix-Carlsen2020a} arise as in-splits of this graph in two different ways:
\[
  \begin{tikzcd}
    & \bullet \arrow[transform canvas={xshift=0.6ex}]{dl} \arrow[transform canvas={xshift=-0.6ex}]{dl} \arrow[transform canvas={xshift=0.6ex}]{dr} \arrow[transform canvas={xshift=-0.6ex}]{dr}& \\
    \bullet \arrow[ur, bend left] & & \bullet \arrow[ul, bend right]
  \end{tikzcd}
  \qquad \textrm{and} \qquad
  \begin{tikzcd}
    & \bullet \arrow{dl} \arrow[transform canvas={xshift=0.9ex}]{dl} \arrow[transform canvas={xshift=1.8ex}]{dl}  \arrow{dr}& \\
    \bullet \arrow[ur, bend left] & & \bullet \arrow[ul, bend right]
  \end{tikzcd}
\]
In both cases, the bottom vertex is split into two vertices but the four edges are distributed differently:
in the left-most graph the two vertices are given two edges each,
while in the right-most graph the edges are distributed with three edges to one vertex and a single edge to the other vertex.
As is always the case for an in-split, the outgoing edge is copied.
\end{example}

It is shown in~\cite[Corollary 3.7]{Eilers-Ruiz} that the balanced in-split induces a *-isomorphism of graph C*-algebras
that is both diagonal-preserving and gauge-equivariant.
Via~\cref{thm:Matsumoto-eventual-conjugacy}, this means that this move induces an eventual conjugacy in Matsumoto's sense on the one-sided shift spaces.
The paper~\cite{Brix2022} gives a direct proof of this fact and also shows the converse for finite graphs with no sinks: 
any eventual conjugacy is the result of balanced in-splits up to one-sided conjugacy.
Rephrasing the balanced in-split into the language of matrices we arrive at the definition of balanced strong shift equivalence.

\begin{definition}
  Let $A$ and $B$ be square $\N$-matrices.
  They are \emph{elementary balanced strong shift equivalent} if there are rectangular $\N$-matrices $R_1$, $R_2$, and $S$ such that
  \[
    A = SR_1, \qquad B = SR_2, \qquad R_1 S = R_2 S.
  \]
  The matrices are \emph{balanced strong shift equivalent} if there are conjugate copies $A'$ and $B'$ of $A$ and $B$, respectively,
  and $n\in \N_+$ and matrices $A' = A_0, \ldots,A_n = B'$ such that $A_{i-1}$ and $A_i$ are elementary balanced strong shift equivalent for all $i=1,\ldots,n$.
\end{definition}

The main result of~\cite{Brix2022} is then the following characterisation.

\begin{theorem} 
  Let $A$ and $B$ be square $\N$-matrices with no zero rows (the graphs have no sinks).
  The one-sided shifts $(\sigma_A, X_A)$ and $(\sigma_B, X_B)$ are Matsumoto eventually conjugate if and only if
  $A$ and $B$ are balanced strong shift equivalent.
\end{theorem}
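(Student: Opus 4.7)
The plan is to prove the two implications separately, with the forward direction handled via the C*-algebraic machinery already developed and the backward direction by a combinatorial higher-block argument.

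For the forward direction, it suffices to treat a single elementary balanced strong shift equivalence $A = SR_1$, $B = SR_2$ with $R_1 S = R_2 S$, since composition and conjugacy can be absorbed (Matsumoto eventual conjugacies compose, and any one-sided conjugacy is eventual with zero lag). Interpreting $S, R_1, R_2$ as incidence matrices of bipartite graphs, the identities $A = SR_1$ and $B = SR_2$ present $A$ and $B$ as two in-splits of the common graph with matrix $M := R_1 S = R_2 S$: the vertex splitting is determined by $S$ and is therefore the same for both, while the distribution of incoming edges is encoded by $R_1$ and $R_2$, which may differ. This is precisely the balanced in-split move of Eilers--Ruiz. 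By~\cite[Corollary 3.7]{Eilers-Ruiz}, a balanced in-split produces a diagonal-preserving, gauge-equivariant *-isomorphism $\OO_A \to \OO_B$, so \cref{thm:Matsumoto-eventual-conjugacy} yields a Matsumoto eventual conjugacy $X_A \to X_B$ (one can check directly that the lag is at most one).

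For the backward direction, assume $h\colon X_A \to X_B$ is a Matsumoto eventual conjugacy. By compactness of the SFTs, the continuous lag functions can be taken constant; fix $N \in \N$ with $\sigma_B^{N+1}\circ h = \sigma_B^N\circ h\circ \sigma_A$ and the symmetric identity for $h^{-1}$. The strategy is to pass to the $(N+1)$-block presentations $A^{[N+1]}$ and $B^{[N+1]}$. These are one-sided conjugate to $A$ and $B$ by repeated out-amalgamation, and each such amalgamation is an elementary balanced SSE with $R_1 = R_2$, so they already fit into the balanced SSE framework. At the higher-block level the lag $N$ is absorbed into the vertex alphabet, and $h$ descends to a genuine $1$-block code. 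I would then construct a common refinement graph whose vertices encode pairs of $(N+1)$-blocks of $X_A$ and $X_B$ matched by $h$, and read off rectangular $\N$-matrices $S$, $R_1$, $R_2$ from the shared target data and the two source data respectively, so that $SR_1 = A^{[N+1]}$, $SR_2 = B^{[N+1]}$, and $R_1 S = R_2 S$ equals the incidence matrix of the common refinement. Concatenating the amalgamations $A \leadsto A^{[N+1]}$, the elementary balanced SSE $A^{[N+1]} \sim B^{[N+1]}$, and the reverse amalgamations $B^{[N+1]} \leadsto B$ gives the desired balanced SSE between $A$ and $B$.

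The main obstacle will be the backward direction, specifically the construction of the common refinement and the verification that the matrices $R_1, R_2, S$ are genuinely $\N$-valued and satisfy $R_1 S = R_2 S$. The delicacy is that $h$ is not itself a sliding-block code $X_A \to X_B$ but only becomes one after composition with a power of $\sigma_B$, so the rule for matching a length-$(N+1)$ block of $X_A$ with the ``correct'' length-$(N+1)$ block of $X_B$ must carefully use the eventual conjugacy identities $\sigma_B^{N+1}\circ h = \sigma_B^N\circ h\circ \sigma_A$ and its inverse rather than a naive pointwise matching. As a sanity check, one may pass through \cref{thm:Matsumoto-eventual-conjugacy} to turn $h$ into a *-isomorphism $\OO_A \to \OO_B$ and recognise the constructed balanced SSE as recovering this *-isomorphism under the forward direction, thereby closing the loop.
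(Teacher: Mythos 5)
Your forward direction follows the route this survey sketches: a balanced in-split yields a diagonal-preserving, gauge-equivariant *-isomorphism of graph C*-algebras by~\cite[Corollary 3.7]{Eilers-Ruiz}, and \cref{thm:Matsumoto-eventual-conjugacy} converts this into a Matsumoto eventual conjugacy; the reduction to elementary steps is fine since eventual conjugacies compose and one-sided conjugacies have lag zero. The one point needing care is your claim that an arbitrary elementary balanced strong shift equivalence $A=SR_1$, $B=SR_2$, $R_1S=R_2S$ \emph{is} a balanced in-split: the matrices $S, R_1, R_2$ are arbitrary rectangular $\N$-matrices, not necessarily of the special $0$--$1$ division form that a graph-theoretic in-split requires, so one must either first decompose the elementary step into honest splittings (a balanced analogue of Williams' decomposition) or, as in~\cite{Brix2022}, construct the lag-one eventual conjugacy directly from the matrix equations. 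This is fillable but not free.

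The backward direction has a genuine structural gap. A single elementary balanced strong shift equivalence certifies an eventual conjugacy of lag at most one, and one-sided conjugacies (in particular the passage to higher-block presentations) have lag zero; composing conjugacies with \emph{one} elementary balanced step therefore only ever certifies an eventual conjugacy of lag at most one. Your plan thus implicitly asserts that any eventual conjugacy can be replaced by one of lag one, which is a strong unproved claim and is exactly why the definition of balanced strong shift equivalence allows chains of elementary steps; the proof in~\cite{Brix2022} accordingly produces a \emph{sequence} of balanced in-splits interleaved with splittings, in effect lowering the lag one step at a time. Relatedly, the assertion that ``the lag $N$ is absorbed into the vertex alphabet'' so that $h$ descends to a genuine $1$-block code cannot be right: higher-block presentations are conjugate to the original systems, so if $h$ became a shift-commuting $1$-block code there, the original systems would be conjugate, contradicting \cref{ex:Brix-Carlsen2020a} and \cref{ex:BFF}. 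The lag is an intrinsic obstruction, not a window-size issue. Finally, there is a directional confusion in the ``common refinement'': to write $A = SR_1$ and $B = SR_2$ with $R_1S = R_2S$ you need a common \emph{amalgamation} sitting below both graphs (an identification of vertices/columns), whereas a graph whose vertices are pairs of blocks matched by $h$ is a common \emph{extension} sitting above them. The verification of $R_1S = R_2S$, which you defer, is the entire content of the theorem and cannot be postponed to a sanity check.
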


Analogous to Williams' classification of two-sided conjugacies,
this result says that $(\sigma_A, X_A)$ and $(\sigma_B, X_B)$ are Matsumoto eventually conjugate if and only the underlying graphs
can be connected by out-splits and balanced in-splits.
The assumption that the graphs admit no sinks is merely a technical one;
it should be possible to prove a similar results for a large class of graphs.

Noting the connection to two-sided conjugacy and strong shift equivalence, we ask the obvious decidability question
which was also mentioned in~\cite{Brix2022}.

\begin{question}
  Is balanced strong shift equivalence decidable?
\end{question}

\section{A discrepancy in one-sided eventual conjugacy} \label{sec:discrepancy}
Williams introduced shift equivalence because it is tamer than strong shift equivalence and easier to determine.
This is an \emph{algebraic} relation on matrices, and the analogous \emph{dynamical} description was only later discovered to be two-sided eventual conjugacy:
all higher powers are two-sided conjugate.
One-sided eventual conjugacy (in the sense of having one-sided conjugate higher powers) was studied in depth in~\cite{Boyle-Fiebig-Fiebig},
and the authors show that this relation is decidable by a very user-friendly algorithm.
We observe in this section the unfortunate fact that this does not coincide with Matsumoto's eventual conjugacy from~\cref{sec:Matsumoto-eventual-conjugacy}.

We can in fact also make sense of a shift equivalence for one-sided shifts (we call this \emph{unital shift equivalence}),
and it turns out that this does not coincide with either of the two version of one-sided eventual conjugacy.
This will be elaborated on in future work~\cite{Brix-unitalSE}.

\subsection{Conjugate higher powers}
We first define the notion of having conjugate higher powers.
This is usually called eventual conjugacy.

\begin{definition}
  Let $\sigma_X\colon X\to X$ and $\sigma_Y\colon Y\to Y$ be dynamical systems (e.g. $\sigma_X$ and $\sigma_Y$ are both (local) homeomorphisms).
  The systems are \emph{eventually conjugate} 
  (or have \emph{conjugate higher powers})
  if the higher powers $\sigma_X^i$ and $\sigma_Y^i$ are conjugate for all but finitely many $i\in \N_+$.
\end{definition}

For shifts of finite type (one-sided or two-sided), the higher power $(\sigma_A^i, X_A)$ is canonically conjugate to $(\sigma_{A^i}, X_{A^i})$.

\begin{example}
  It is important that all sufficiently high powers are conjugate.
  The matrices
  \[
    A =
    \begin{pmatrix}
      0 & 2 \\
      2 & 0
    \end{pmatrix}\quad \textrm{and} \quad
    B =
    \begin{pmatrix}
      2 & 0 \\
      0 & 2
    \end{pmatrix}
  \]
  satisfy $A^{2n} = B^{2n}$ for all $n\in \N$ but one is irreducible while the other is reducible.
\end{example}

In the case of two-sided shifts of finite type, having conjugate higher powers coincides with shift equivalence, cf.~\cref{sec:shift-equivalence}.
One direction was observed by Williams~\cite{Williams1973}, and the converse was proved by~Kim and Roush~\cite{Kim-Roush1979} (see also~\cite[Theorem 7.5.15]{Lind-Marcus}).

\begin{theorem} \label{thm:SE-higher-powers}
  Let $(\sigma_A, \underline{X}_A)$ and $(\sigma_B, \underline{X}_B)$ be two-sided shifts of finite type.
  The matrices $A$ and $B$ are shift equivalent if and only if the systems have conjugate higher powers.
\end{theorem}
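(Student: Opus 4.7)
The plan is to handle the two implications separately, as they are of very different character. The forward direction will be a short algebraic calculation going back to Williams, while the reverse direction is the substantive Kim--Roush content and will proceed via the dimension triple invariant.

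For the forward direction, I would start from shift equivalence data: a lag $\ell$ and bridging $\N$-matrices $R, S$ satisfying $A^\ell = RS$, $B^\ell = SR$, $AR = RB$, and $BS = SA$. Iterating the intertwining identities yields $A^k R = R B^k$ and $B^k S = S A^k$ for all $k \geq 0$. For any $n \geq \ell$ I would set $X := R B^{n-\ell}$ and $Y := S$ and compute
\[
  XY = R B^{n-\ell} S = R S A^{n-\ell} = A^\ell A^{n-\ell} = A^n, \qquad YX = S R B^{n-\ell} = B^\ell B^{n-\ell} = B^n,
\]
exhibiting an elementary strong shift equivalence between $A^n$ and $B^n$. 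Williams' theorem then gives that $(\sigma_{A^n}, \underline{X}_{A^n})$ and $(\sigma_{B^n}, \underline{X}_{B^n})$ are conjugate, and these identify canonically with the higher powers $\sigma_A^n$ and $\sigma_B^n$, establishing eventual conjugacy.

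For the reverse direction, I would start from the assumption that $\sigma_A^n$ and $\sigma_B^n$ are conjugate for all $n \geq N$. Williams' theorem then gives strong shift equivalence, hence shift equivalence, of $A^n$ and $B^n$ for every such $n$. Since the inductive limit defining $\Delta_A$ is cofinal in the powers of $A$, the dimension triple of $A^n$ canonically identifies with $(\Delta_A, \Delta_A^+, \delta_A^n)$, and similarly for $B$. Krieger's classification then supplies, for each $n \geq N$, an ordered group isomorphism $\phi_n \colon \Delta_A \to \Delta_B$ satisfying $\phi_n \delta_A^n = \delta_B^n \phi_n$. The remaining goal is to produce a single isomorphism $\phi$ with $\phi \delta_A = \delta_B \phi$, whereupon a second invocation of Krieger's theorem delivers shift equivalence of $A$ and $B$.

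The hard part is precisely this final upgrade step. The $\phi_n$ are not a priori compatible across different $n$, and, in general, the centraliser of a power of an order-automorphism strictly contains the centraliser of the automorphism itself; so an intertwiner of $\delta_A^n$ and $\delta_B^n$ need not be an intertwiner of $\delta_A$ and $\delta_B$. This is the delicate content of Kim and Roush's argument in \cite{Kim-Roush1979}, which I would reproduce by exploiting the rational structure of $(\Delta_A, \delta_A)$ and spectral properties of primitive $\N$-matrices (notably the Perron eigenvalue and its eigenspace) to rigidify the choice of intertwiner. This is the step that deserves the most care in a complete write-up.
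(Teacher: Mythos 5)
Your forward direction is correct and complete: from a lag-$\ell$ shift equivalence $(R,S)$ the identities $\bigl(RB^{n-\ell}\bigr)S = RSA^{n-\ell} = A^n$ and $S\bigl(RB^{n-\ell}\bigr) = B^n$ give an elementary strong shift equivalence between $A^n$ and $B^n$ for every $n\geq\ell$, and Williams' classification plus the canonical identification of $(\sigma_A^n,\underline{X}_A)$ with $(\sigma_{A^n},\underline{X}_{A^n})$ yields conjugacy of all sufficiently high powers. This is precisely the implication the paper attributes to Williams. For context: the paper contains no proof of either direction of this theorem; it only cites \cite{Williams1973} for this implication and \cite{Kim-Roush1979} (see also \cite[Theorem 7.5.15]{Lind-Marcus}) for the converse, so there is no in-paper argument to measure your reverse direction against.

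That reverse direction, however, has a genuine gap, and you have located it yourself. Everything you write up to the production of ordered-group isomorphisms $\phi_n\colon\Delta_A\to\Delta_B$ with $\phi_n\delta_A^n=\delta_B^n\phi_n$ for all large $n$ is a formal consequence of Williams' decomposition theorem and Krieger's classification; the entire content of the Kim--Roush direction is then compressed into the unproved claim that these can be replaced by a single ordered isomorphism intertwining $\delta_A$ and $\delta_B$ themselves. As you correctly observe, an intertwiner of $n$-th powers need not intertwine first powers, and the $\phi_n$ produced by Krieger's theorem for different $n$ carry no a priori compatibility, so no formal manipulation of them yields the desired $\phi$; identifying this obstacle is not the same as overcoming it. The proposed remedy --- exploiting ``the rational structure of $(\Delta_A,\delta_A)$'' and Perron eigenvalues of \emph{primitive} matrices --- is a gesture rather than an argument, and it also quietly imports a primitivity hypothesis that the statement does not carry: the theorem is asserted for arbitrary two-sided shifts of finite type, and the paper is elsewhere careful to separate the primitive case from the general one (cf.\ \cref{thm:Bratteli-Kishimoto}). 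As written, the proposal proves the easy implication and \emph{reformulates}, but does not prove, the hard one; completing it requires actually carrying out the argument of \cite{Kim-Roush1979} (or the account in \cite[Theorem 7.5.15]{Lind-Marcus}), which is where all of the work in this theorem lies.
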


This result provides the dynamical analogue of the algebraic notion of shift equivalence.
A similar correspondence does not occur in the one-sided setting; this will be elaborated on elsewhere.

\begin{remark}
  A similar result applies to sofic shifts.
  Boyle and Krieger~\cite{Boyle-Krieger1988} introduce and study shift equivalence for two-sided sofic systems
  and show that it coincides with having conjugate higher powers. 
\end{remark}

In~\cite{Boyle-Fiebig-Fiebig}, Boyle, D.~Fiebig, and U.~Fiebig study one-sided shifts of finite type that have conjugate higher powers
(they refer to this as one-sided eventual conjugacy).
We present here the main result related to this notion.

Let $N(A)$ be the least positive integer satisfying $\textrm{rank} (A^n) = \textrm{rank} (A^{n + 1})$
which is at most the dimension of $A$.
The surprisingly nice characterisation below is from~\cite[Section 8]{Boyle-Fiebig-Fiebig}.

\begin{theorem} \label{thm:BFF}
  Let $A$ and $B$ be square $\N$-matrices and let $n \geq \max\{ N(A), N(B) \}$.
  The following are equivalent.
  \begin{enumerate}
    \item $A$ and $B$ determine one-sided shifts of finite type that have conjugate higher powers;
    \item the total amalgamations of $A^m$ and $B^m$ agree (up to the same permutation) for $m = n, n+1$; and
    \item the total amalgamations of $A^m$ and $B^m$ agree (up to the same permutation) for all $m\geq n$.
  \end{enumerate}
\end{theorem}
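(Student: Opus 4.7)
The plan is to build on Williams' amalgamation theorem (\cref{thm:Williams-one-sided}) together with a careful analysis of how the total amalgamation of $A^m$ stabilises as $m$ grows. The observation that $(X_A,\sigma_A^m)$ is canonically conjugate to $(X_{A^m},\sigma_{A^m})$ means that, by Williams, one-sided conjugacy of the higher power $\sigma_A^m$ with $\sigma_B^m$ is the same as agreement of the total amalgamations of $A^m$ and $B^m$ up to some permutation. The role of $n\geq\max\{N(A),N(B)\}$ is that $\ker(A^m)$ stabilises there: since $A^me_i=A^me_j$ iff $e_i-e_j\in\ker(A^m)$, the column-equivalence relation defining the total amalgamation of $A^m$ also stabilises, so that for every $m\geq n$ the total amalgamation $(A^m)^*$ lives on a fixed vertex set $V^*_A$ (the quotient of the original vertex set by column-equivalence of $A^n$), with analogous statements for $B$.

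The implication (3)$\Rightarrow$(2) is immediate, and (3)$\Rightarrow$(1) is Williams applied at each $m\geq n$. For (1)$\Rightarrow$(2), once the vertex sets of the amalgamations have stabilised, any conjugacy $X_{A^m}\cong X_{B^m}$ induces a bijection $V^*_A\to V^*_B$; using the rigidity of the minimal presentation together with the fact that the hypothesised conjugacies at levels $n$ and $n+1$ act on the same underlying Cantor set, one can arrange a single permutation $P$ that witnesses the agreement at both levels.

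The core step is (2)$\Rightarrow$(3), which I would prove by induction on $m\geq n$. Using $A^{m+1}=A\cdot A^m$ together with $A^me_k=A^me_{k'}$ whenever $k\sim k'$, one establishes a recursion of the form
\[
  (A^{m+1})^*([i],[j]) \;=\; \sum_{[k]\in V_A^*} (A^m)^*([i],[k])\cdot M_A([k],[j]),
\]
where $M_A$ is a matrix on $V^*_A\times V^*_A$ built from $A$ by summing entries over equivalence classes. Given (2) with permutation $P$, one extracts $M_A$ and $M_B$ from the pairs $((A^n)^*,(A^{n+1})^*)$ and $((B^n)^*,(B^{n+1})^*)$ by solving the resulting linear system (uniquely solvable because, after kernel stabilisation, $(A^n)^*$ has full rank on $V^*_A$), deduces $P^{-1}M_BP=M_A$, and then iterates the recursion to propagate $(A^m)^*=P^{-1}(B^m)^*P$ to every $m\geq n$.

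The main obstacle is verifying that the transition matrix $M_A$ is actually well-defined on the quotient $V^*_A$, i.e.\ that the relevant row sums over equivalence classes depend only on source and target class rather than on the representatives used to define them. This is where the threshold $N(A)$ enters crucially and requires a dual analogue of the kernel-stabilisation argument on $A$ itself (not just on its powers). Once this well-definedness is established the remaining steps are bookkeeping, but isolating this clean linear-algebraic recursion is what underlies the surprising sharpness of \cref{thm:BFF}.
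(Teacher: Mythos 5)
The survey does not actually prove \cref{thm:BFF}; it quotes the result from Boyle--Fiebig--Fiebig \cite[Section 8]{Boyle-Fiebig-Fiebig}, whose argument runs through their dimension-group machinery for one-sided shifts (the stabilised images $A^n\Z^{|A|}$ and explicit constructions of conjugacies between higher powers), not through the linear-algebraic recursion you propose. So your attempt has to stand on its own, and as written it has several genuine gaps. The most basic one is that you conflate a single round of collapsing identical columns with the \emph{total} amalgamation: the latter is an iterated procedure, and after one collapse new identical columns can appear, so the vertex set of $(A^m)^*$ is in general a proper quotient of your $V_A^*$ (the quotient by column-equivalence of $A^n$). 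Your kernel-stabilisation observation correctly shows that the \emph{first-step} column equivalence of $A^m$ is constant for $m\geq N(A)$, but that does not identify the vertex set of the total amalgamation, and the recursion $(A^{m+1})^*=(A^m)^*M_A$ is set up on the wrong object. Even granting a fixed vertex set, your propagation step hinges on the claim that $(A^n)^*$ "has full rank on $V_A^*$": this is false in general. Distinct columns of an $\N$-matrix need not be linearly independent, and kernel stabilisation gives no invertibility of the amalgamated matrix; without injectivity of $(A^n)^*$ you cannot conclude $M_A=P^{-1}M_BP$ from agreement at levels $n$ and $n+1$, so the induction does not close. You also concede that the well-definedness of $M_A$ on the quotient is unverified, which is not a loose end but the entire content of (2)$\Rightarrow$(3).

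The direction (1)$\Rightarrow$(2) is treated even more lightly, and it is arguably the heart of the theorem. Condition (1) only provides conjugacies $\sigma_A^i\cong\sigma_B^i$ for all but finitely many $i$; the levels $n$ and $n+1$ may well be among the excluded ones, so there are no "hypothesised conjugacies at levels $n$ and $n+1$" to work with. One must descend from agreement at all sufficiently large exponents down to the threshold $\max\{N(A),N(B)\}$, and simultaneously show that the a priori different permutations $P_m$ furnished by Williams' theorem at different levels can be made coherent. Invoking "rigidity of the minimal presentation" and the fact that the conjugacies "act on the same underlying Cantor set" names the difficulty rather than resolving it. In short, the skeleton (apply \cref{thm:Williams-one-sided} levelwise, exploit rank stabilisation at $N(A)$) is a reasonable starting point, but each of the three nontrivial implications is missing its key step.
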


An immediate consequence of this theorem is that having conjugate higher powers for one-sided shifts of finite type is decidable.
Simply construct the total amalgamations of $A^m$ and $B^m$ and check whether they are the same (up to permutation).
For very large systems, the task of determining graph isomorphism of the total amalgamations can of course be formidable,
but often times in practice this is very user-friendly.

Boyle--Fiebig--Fiebig also provide an example to show that having conjugate higher powers does not coincide with one-sided conjugacy.
Below we generalise this example slightly and show that the one-sided systems determined from the matrices are Matsumoto eventually conjugate.

\begin{example}\label{ex:BFF}
  For each $k\in \N_+$, we consider the matrix
  \[
    A_k = 
    \begin{pmatrix}
      2k & 0  & 4k \\
      k  & 2k & 0  \\
      k  & 2k & 0  \\
    \end{pmatrix}.
  \]
  For $k=1$, this is~\cite[Example 5.6]{Boyle-Fiebig-Fiebig}.
  The total amalgamation of $A_k^2$ is the matrix $(4k)^2$
  while the total amalgamation of $A_k^3$ is the matrix $(4k)^3$.
  Therefore, it follows from~\cref{thm:BFF} that $A_k$ and $(4k)$ have conjugate higher powers. 
  On the other hand, $A_k$ is already in the form of a total amalgamation.
  This means that $A_k$ and $(4k)$ are not one-sided conjugate for any $k$.

  Next we observe that the matrices $A_k$ and $(4k)$ are in fact balanced strong shift equivalent via the matrix
  \[
    C = 
    \begin{pmatrix}
      2k & 4k \\
      k & 2k
    \end{pmatrix}.
  \]
  Explicitly, we consider the matrices
  \[
    S = 
    \begin{pmatrix}
      1 & 0 \\
      0 & 1 \\
      0 & 1
    \end{pmatrix}, \quad
    R_1 = 
    \begin{pmatrix}
      2k & 0  & 4k \\
      k  & 2k & 0
    \end{pmatrix},\quad \textrm{and} \quad
    R_2 =
    \begin{pmatrix}
      2k & 2k & 2k \\
      k  & k  & k
    \end{pmatrix}.
  \]
  Direct computations show that
  \[
    A_k = S R_1, \qquad S R_2 = 
    \begin{pmatrix}
      2k & 2k & 2k \\
      k  & k  & k  \\
      k  & k  & k
    \end{pmatrix}, \quad \textrm{and} \quad
    R_1 S = C = R_2 S.
  \]
  Since the total amalgamation of $S R_2$ is the matrix $(4k)$,
  the matrices $S, R_1, R_2$ implement a concrete balanced strong shift equivalence between $A_k$ and $(4k)$ for each $k\in \N$,
  so the one-sided shifts are Matsumoto eventually conjugate.
\end{example}

The above observations lead us to ask whether one-sided eventual conjugacy in the sense of having conjugate higher powers 
and in Matsumoto's sense simply coincide. 
This is not the case.

\begin{example}
  The adjacency matrices of the graphs in~\cref{ex:Brix-Carlsen2020a} are
  \[
    A = 
    \begin{pmatrix}
      0 & 2 & 2 \\
      1 & 0 & 0 \\
      1 & 0 & 0 
    \end{pmatrix}\quad \textrm{and} \quad
    B = 
    \begin{pmatrix}
      0 & 3 & 1 \\
      1 & 0 & 0 \\
      1 & 0 & 0 
    \end{pmatrix}.
  \]
  The total amalgamation of any power of $A$ is always a $2\times 2$-matrix,
  while the total amalgamation of any power of $B$ is a $3\times 3$-matrix.
  Therefore, the one-sided systems do not have conjugate higher powers. 
\end{example}

The examples so far have shown us that both having conjugate higher powers and being balanced strong shift equivalent
are nontrivial in the sense that they do not coincide with one-sided conjugacy. 
We have also seen that the two notions do not agree, in particular balanced strong shift equivalence does not imply having conjugate higher powers.
This is why we do not simply refer to the relations as \emph{one-sided eventual conjugacy}.
We leave the following problem open.

\begin{question}
  Does having conjugate higher powers imply balanced strong shift equivalence?
\end{question}

\begin{example}\label{ex:Ashley}
Consider Ashley's graph
\[
  \begin{tikzcd}
    & \bullet \arrow[dl] \arrow[loop, looseness=5] & \bullet \arrow[l] \arrow[loop, looseness=5]&           \\
    \bullet \arrow[d] \arrow[rrrd] &         &         & \bullet \arrow[ul]  \arrow[ddl] \\
    \bullet \arrow[dr] \arrow[rrru] &         &         & \bullet \arrow[u] \arrow[lll] \\
    & \bullet \arrow[r] \arrow[uul] & \bullet \arrow[ur] \arrow[l, bend right] &           
  \end{tikzcd}
\]
The characteristic polynomial of the adjacency matrix of this graph is $x^7(x-2)$ and a result of Williams (see e.g.~\cite[Lemma 2.2.6]{Kitchens}) 
implies that the two-sided edge shift is shift equivalent to the matrix $(2)$, 
the full $2$-shift.
According to~\cite[Example 2.2.7 and the notes to that section]{Kitchens} it has been an open problem since 1989 when the graph was discovered 
to determine whether they are two-sided conjugate.

A computation of the higher powers of the adjacency matrix of Ashley's graph shows that all the higher powers are one-sided conjugate to the higher powers of the full $2$-shift (using~\cref{thm:BFF}),
so they have conjugate higher powers.
This makes the above question of whether this implies (balanced) strong shift equivalence even more pressing.
\end{example}

\begin{example}
  Consider Rourke's example of shift equivalent primitive matrices
  \[
    A = 
    \begin{pmatrix}
      1 & 2 & 1 \\
      1 & 1 & 0 \\
      1 & 0 & 1
    \end{pmatrix} \qquad \textrm{and} \qquad
    B = 
    \begin{pmatrix}
      1 & 0 & 1 & 0 & 1 \\
      0 & 1 & 1 & 1 & 0 \\
      1 & 1 & 1 & 0 & 0 \\
      1 & 0 & 0 & 0 & 1 \\
      0 & 1 & 0 & 1 & 0 
    \end{pmatrix}
  \]
  from the errata of~\cite{Williams1973}.
  Williams remarks that it is not known if they are strong shift equivalent.
  A computation of the higher powers shows that the total amalgamation of $B^n$ is $A^n$ for $n\geq 2$,
  so the matrices have conjugate higher powers.
  I do not know if the question of strong shift equivalence has been resolved, or if they are balanced strong shift equivalent.
\end{example}

\section{Flow equivalence}\label{sec:flow-equivalence}
In this section we return to two-sided dynamics.
Let us first define the notion of flow equivalence as in e.g.~\cite[Section 13.6]{Lind-Marcus}.

Let $X$ be a compact metric space equipped with a homeomorphism $\sigma\colon X\to X$.
The product $X\times \R$ carries an obvious $\R$-action (a flow)
and the \emph{suspension space} is the quotient of $X\times \R$ by the identification $(\sigma(x), t) \sim (x, t+1)$ for all $x\in X$ and $t\in \R$.
Let $[x,t] \in X\times \R/ \sim$ be the class of $(x,t)\in X\times \R$.
The quotient is a compact metric space and it carries an induced flow (the suspension flow) $s.[x,t] = [x,t+s]$ for all $x\in X$ and $t,s\in \R$.
A pair of homeomorphisms $\sigma_X\colon X\to X$ and $\sigma_Y\colon Y\to Y$ are \emph{flow equivalent} 
if their suspension flows are topologically equivalent,
i.e. there is a homeomorphism $h\colon X\times \R/ \sim \to X\times \R/ \sim$ that takes orbits (flow lines) to orbits in an orientation-preserving way.
Note that if the space $X$ is totally disconnected, then the flow lines are exactly the connected components of $X\times \R/\sim$,
so flow equivalence is a matter of 'oriented homeomorphism' on the suspension spaces.

The system $\sigma_X\colon X\to X$ appears as a cross section of the flow on the suspension space. 
Equivalently, two homeomorphisms are flow equivalent if they are cross sections of a common flow. 
As Franks mentions in~\cite{Franks1984}, symbolic dynamics is useful as a tool for studying the qualitative behaviour of certain systems of smooth dynamics.
By restricting to a transversal with its first return map (a cross section), often times the resulting homeomorphism is an irreducible shift of finite type. 
Restricting to a different transversal will result in a different system but they are flow equivalent.

In the beginning of the 1970s about the time when Williams was working on conjugacy, Bowen~\cite{Bowen1972} studied the flows coming from shifts of finite type.
The hugely influential (and notoriously short) paper by Parry and Sullivan~\cite{Parry-Sullivan} followed a few years later
in which the authors prove that flow equivalence of shifts of finite type is generated by conjugacy and \emph{symbol expansion}
(the reader \emph{with a less direct pipeline to topological truth} may want to consult e.g.~\cite{Boyle-Carlsen-Eilers2017}).
We illustrate symbol expansion with the following example that we saw in~\cref{ex:SFT}.

\begin{example}\label{ex:symbol-expansion}
  Consider the graphs $E$ and $F$ below
  \[
  \begin{tikzcd}
    \bullet \arrow[loop, looseness=5, "e" above] \arrow[loop, looseness=5,, out=310, in=230, "f"] 
  \end{tikzcd} \qquad \textrm{and}
  \begin{tikzcd}
    \bullet \arrow[loop, looseness=5, "e_1" above] \arrow[r, bend right, "f_1" below] & \bullet \arrow[l, bend right, "f_2" above]
  \end{tikzcd}
  \]
  We construct $F$ from $E$ by performing a symbol expansion on the symbol $f$ in $E$.
  Choose a new symbol, in this case $f_2$, and demand that it always follows $f$.
  In this way, we have slowed down the time it takes to walk along $f$.
  By identifying $f$ in $E$ with $f_1$ in $F$ we obtain the graph $F$.
  The reader may chack that the association $e \mapsto e_1$ and $f \mapsto f_1 f_2$ induces a flow equivalence between $E$ and $F$.

  Note that the edge shift of $E$ has no obstructions on its allowed paths (the full $2$-shift), so its entropy is $\log 2$,
  while it can be shown that the entropy of $F$ is $(1+\sqrt{5})/2$ (the golden mean), cf.~\cite[Example 4.1.4]{Lind-Marcus}.
  In particular, the systems are not conjugate and entropy is not invariant under flow equivalence.
\end{example}

Parry and Sullivan also showed that the integer $\det(\Id - A)$ is an invariant of the flow equivalence class of $(\sigma_A, \underline{X}_A)$
(in~\cref{ex:symbol-expansion} this value if $-1$ for both graphs).
Shortly after, Bowen and Franks~\cite{Bowen-Franks} showed that the kernel and cokernel of $\Id - A$ viewed as a linear map on $\Z^{|A|}$
(cf.~\labelcref{eq:Bowen-Franks}) are also invariant under flow equivalence 

The cokernel $\BF(A) = \Z^{|A|}/ (\Id-A)\Z^{|A|}$ (which today is usually referred to as the Bowen--Franks group) almost subsumes the information of the determinant since
\[
|\det(\Id-A)| =
  \begin{cases}
   |\BF(A)| & \textrm{if}~\BF(A)~\textrm{is finite}, \\
   0 & \textrm{if}~\BF(A)~\textrm{is infinite},
  \end{cases}
\]
so only the \emph{sign} of the determinant contains new information not retrievable from the group, cf.~\cref{ex:Cuntz-splice}.
We may therefore say that the \emph{signed Bowen--Franks group} 
\[
  (\BF(A), \sign \det(\Id-A))
\]
is an invariant of flow equivalent.
It is a surprising fact that for irreducible and nonpermutation matrices this is a \emph{complete} invariant.
This is Franks' classification result~\cite{Franks1984}.

\begin{theorem} 
  Let $A$ and $B$ be irreducible and nonpermutation $\N$-matrices.
The two-sided shifts $(\sigma_A, \underline{X}_A)$ and $(\sigma_B, \underline{X}_B)$ are flow equivalent if and only if $\BF(A) \cong \BF(B)$ and $\sign \det(\Id-A)) = \sign \det(\Id-A)$.
\end{theorem}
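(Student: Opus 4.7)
The plan is to establish the two directions separately, with the nontrivial content lying in the sufficiency.

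For the necessity direction, it suffices to show that $\BF(A)$ and $\sign \det(\Id-A)$ are flow equivalence invariants. By the Parry--Sullivan theorem, flow equivalence of shifts of finite type is generated by two types of moves at the level of the defining matrix: conjugacy (equivalently, strong shift equivalence $A = RS$, $B = SR$) and symbol expansion (insertion of an auxiliary vertex along an edge). I would verify directly that each of these operations leaves the pair $(\BF(A), \sign\det(\Id-A))$ invariant. Strong shift equivalence preserves the cokernel of $\Id - A$ because $\Id - RS$ and $\Id - SR$ have isomorphic cokernels (a routine calculation using the block matrix $\begin{pmatrix} \Id & R \\ S & \Id \end{pmatrix}$), and one checks it preserves the signed determinant as well. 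For symbol expansion, the matrix $\Id - A$ is enlarged by a single row and column in a way that preserves both the cokernel and $\sign\det(\Id - A)$ (one can see this via block row/column operations over $\Z$, noting that expanding a symbol once adds the block $\begin{pmatrix} \Id & 0 \\ * & -1 \end{pmatrix}$ up to rearrangement, whose determinant is $-1$, but the effect on the underlying dynamics' determinant is controlled). Together with the Bowen--Franks--Parry--Sullivan invariance theorems already in the literature, this gives the necessity.

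For the sufficiency direction, the strategy is again to use Parry--Sullivan to reduce the problem to a matrix normal-form theorem: show that every irreducible nonpermutation $\N$-matrix $A$ can be connected, via a finite sequence of strong shift equivalences and symbol expansions/contractions, to a canonical representative of $(\BF(A), \sign\det(\Id-A))$. Concretely, I would choose a family of ``Franks forms'' $F(G,\varepsilon)$, one for each pair of a finitely generated abelian group $G$ and a sign $\varepsilon \in \{+1, -1, 0\}$, in such a way that different invariants yield non-flow-equivalent matrices. For instance, one can use matrices whose ``$\Id - A$'' has Smith normal form reflecting the invariant factors of $G$, padded with an extra row/column controlling the sign.

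The technical heart is then the reduction algorithm: given an arbitrary irreducible nonpermutation $A$, produce a sequence of flow moves reducing $A$ to $F(\BF(A), \sign\det(\Id-A))$. I would proceed by showing that the allowed matrix operations realize, on $\Id - A$, enough elementary row and column operations over $\Z$ to reach Smith normal form while staying inside the class of $\N$-matrices coming from irreducible graphs. Specifically: (i) symbol expansion allows the dimension to be increased and provides room to manipulate entries; (ii) strong shift equivalences of the form $A = RS$, $SR = B$ with carefully chosen $\{0,1\}$-matrices $R,S$ implement basic graph reductions (in-/out-splittings and amalgamations); (iii) together these moves suffice to perform any $\mathrm{SL}(n,\Z)$-equivalence of $\Id - A$ after possibly enlarging the matrix. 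The irreducibility and nonpermutation hypotheses are essential to keep intermediate matrices within the class where these moves apply and to avoid collapsing components.

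The main obstacle is the last step: producing the explicit reduction algorithm while respecting nonnegativity of entries and irreducibility throughout. Integer row and column operations can easily leave the $\N$-matrix world, so one must carefully combine them with symbol expansions that supply ``positive slack.'' This is precisely the combinatorial bookkeeping that makes Franks' original argument nontrivial, and where the sign of the determinant enters as the obstruction controlling orientation of the suspension flow (and hence whether the required reductions can be done without an extra ``twist''). Once the reduction is achieved, uniqueness of the Franks form given its invariants completes the proof.
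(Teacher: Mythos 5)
This theorem is Franks' classification, which the paper states without proof and attributes to~\cite{Franks1984}; there is no in-paper argument to compare against, so the relevant benchmark is Franks' original proof. Your outline matches its architecture faithfully: necessity by checking that the Parry--Sullivan generators of flow equivalence (elementary strong shift equivalence and symbol expansion) preserve the cokernel of $\Id-A$ and $\det(\Id-A)$ --- indeed $\Id-RS$ and $\Id-SR$ have isomorphic cokernels and equal determinants, and symbol expansion preserves $\det(\Id-A)$ outright, which is precisely the Parry--Sullivan invariant --- and sufficiency by reducing an arbitrary irreducible nonpermutation matrix to a normal form determined by $(\BF(A),\sign\det(\Id-A))$ via flow moves. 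One small wrinkle: your parenthetical about symbol expansion contributing a block of determinant $-1$ is misleading; the correct statement is that the expansion leaves $\det(\Id-A)$ unchanged, which is what makes it an invariant rather than something merely ``controlled.''

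The genuine gap is that the entire mathematical content of the sufficiency direction is concentrated in the step you defer: the lemma that elementary integer row and column operations on $\Id-A$ (specifically, adding one row or column to another) can be realized by flow equivalences while remaining in the class of irreducible $\N$-matrices. You correctly identify this as ``the main obstacle'' and describe the need for ``positive slack'' from symbol expansions, but you do not exhibit the moves, and this is not routine: naive integer operations immediately produce negative entries or destroy irreducibility, and Franks' paper is devoted almost entirely to constructing the specific splittings and expansions that circumvent this. Without that lemma (or a precise citation to it), the reduction to a ``Franks form'' and the claim that matrices with the same invariants are connected by flow moves are assertions, not proofs. As it stands your text is an accurate roadmap of the known proof rather than a proof; to close it you would need either to prove the realization lemma or to isolate it as the single external input being invoked.
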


Using a different approach, Boyle and Handelman~\cite{Boyle-Handelman1996} studied the \emph{ordered cohomology} of a shift of finite type $(\sigma_A, \underline{X}_A)$ given as
\[
  C(\underline{X}_A, \Z) / \{ f-f\circ \sigma_A : f\in C(\underline{X}_A, \Z) \}
\]
with the order $[f]\geq 0$ if there exists a nonnegative representative $f\geq 0$.
For Cantor minimal systems, this is the group that Giordano, Putnam, and Skau used to classify Cantor minimal systems up to orbit equivalence, cf.~\cref{sec:CMS}.
Boyle and Handelman initiated a more general study of this ordered group, and proved that it classifies irreducible shifts of finite type up to flow equivalence~\cite[Theorem 1.12]{Boyle-Handelman1996}.
It might be the case that the ordered cohomology with an order unit is relevant for further classification of shifts of finite type, cf.~\cite[Problem 23.4]{Boyle2008}.

A complete invariant of flow equivalence for general shifts of finite type exists, but it is \emph{much} more complicated and involves the $K$-web.
The work of Boyle~\cite{Boyle2002} and Boyle--Huang~\cite{Boyle-Huang2003} develops this in a way that emphasises certain positivity and algebraic aspects separately.
For two-component shifts of finite type, Huang~\cite{Huang1994} has a satisfying classification of flow equivalence
using Frank's classification on the irreducible components as well as a distinguished element in a tensor product of Bowen--Franks groups,
the latter of which he refers to as the \emph{Cuntz group} since Cuntz found this to be an invariant of flow equivalence in~\cite[Section 4]{Cuntz1981b}.

We mention also here that that flow equivalence was proven to be decidable by Boyle and Steinberg~\cite[Corollary 2.4]{Boyle-Steinberg} 
although their proof only provides the existence of a decision algorithm.

Since the inception of the Cuntz--Krieger algebras, flow equivalence has played an important role.
Cuntz and Krieger showed~\cite[Theorem 4.1]{Cuntz-Krieger1980} that the stabilised C*-algebra $\OO_A\otimes \K$ together with its diagonal subalgebra $C(X_A)\otimes c_0$ 
is an invariant of the flow class of $(\sigma_A, \underline{X}_A)$.
They point out that using Parry and Sullivan's characterisation (the symbol expansion), this is an \emph{instant computational proof}.
The following example should be seen in this light.

\begin{example}\label{ex:Cuntz-splice}
  Consider the graphs
  \[
    \begin{tikzcd}
      \bullet \arrow[loop, looseness=5,] \arrow[loop, looseness=5,, out=310, in=230,] 
    \end{tikzcd} \qquad \textrm{and}
    \begin{tikzcd}
      \bullet \arrow[loop, looseness=5] \arrow[loop, looseness=5,, out=310, in=230] \arrow[r, bend right] &
      \bullet \arrow[loop, looseness=5] \arrow[l, bend right] \arrow[r, bend right] &
      \bullet \arrow[loop, looseness=5] \arrow[l, bend right]
    \end{tikzcd}
  \]
  and note that the Bowen--Franks groups are trivial for both graphs.
  The two-sided shifts are however not flow equivalent as the reader may check by computing the sign of the determinant.

  The C*-algebra of the graph with a single vertex and two loops is Cuntz' algebra $\OO_2$, 
  and the C*-algebra of the right-most graph is usually known as $\OO_{2-}$.
  For a long time it was not known whether the two were stably isomorphic,
  and it was only with R\o rdam's clasification of simple Cuntz--Krieger algebras it was finally concluded that $\OO_2 \cong \OO_{2-}$.

  For the cognoscenti, this is an example of a \emph{Cuntz splice}; a move designed to flip the sign of the determinant and thereby change the flow class of the graph.
\end{example}

The fact that two systems that are dynamically far apart (they are not flow equivalent)
can have *-isomorphic C*-algebras was a mystery.
How much is actually remembered (or lost) in the transition from dynamical systems or graphs to C*-algebras?

The mystery was resolved only much later by Matsumoto and Matui.
Their C*-algebraic characterisation of flow equivalence for infinite irreducible shifts of finite type is from~\cite[Corollary 3.8]{Matsumoto-Matui2014}.
The result was later generalised to include the reducible shifts of finite type in~\cite{CEOR}.

\begin{theorem} \label{thm:MM14-flow}
  Let $A$ and $B$ be square $\N$-matrices.
  The two-sided shifts $(\sigma_A, \underline{X}_A)$ and $(\sigma_B, \underline{X}_B)$ are flow equivalent if and only there exists a stable *-isomorphism
  $\varphi\colon \OO_A\otimes \K \to \OO_B\otimes \K$ satisfying $\varphi(C(X_A)\otimes c_0) = C(X_B)\otimes c_0$.
\end{theorem}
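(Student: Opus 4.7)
The plan is to treat the two implications separately, with the forward direction being a reduction to a known normal form for flow equivalence and the reverse direction going through Renault reconstruction and a known correspondence between groupoid stable isomorphism and flow equivalence.

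For the forward direction, my strategy is to use Parry and Sullivan's theorem that flow equivalence of shifts of finite type is generated by conjugacy and symbol expansion. For the conjugacy step, \cref{thm:Carlsen-Rout} already delivers a diagonal- and gauge-preserving stable $*$-isomorphism, which is much stronger than what is needed here; so we get the desired diagonal-preserving stable $*$-isomorphism for free. For symbol expansion, I would proceed by an explicit construction: if $B$ arises from $A$ by expanding a symbol $i$ into two consecutive symbols $i_1, i_2$, then one can write down a concrete Cuntz--Krieger family for $A$ inside $\OO_B \otimes \K$ by using matrix units in $\K$ to book-keep the extra ``clock tick'' introduced by the expansion, and verify by direct inspection that the image of $C(X_A) \otimes c_0$ lands in (and fills out) $C(X_B) \otimes c_0$. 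This is essentially the \emph{instant computational proof} alluded to by Cuntz and Krieger.

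For the reverse direction, the plan is to pass through groupoids. The C*-algebras $\OO_A \otimes \K$ and $\OO_B \otimes \K$ are canonically isomorphic to the groupoid C*-algebras of $G_{\sigma_A} \times \mathcal{R}_{\N}$ and $G_{\sigma_B} \times \mathcal{R}_{\N}$, where $\mathcal{R}_{\N}$ is the full transitive equivalence relation on $\N$ (responsible for the compact operators), and the diagonals $C(X_A) \otimes c_0$, $C(X_B) \otimes c_0$ are the canonical Cartan subalgebras $C_0$ of the unit spaces. These groupoids are second-countable, Hausdorff, étale, and have abelian (in fact trivial after localisation to the appropriate part) torsion-free isotropy interior, so \cref{thm:Renault-reconstruction} applies and produces a topological groupoid isomorphism
\[
  G_{\sigma_A} \times \mathcal{R}_{\N} \;\cong\; G_{\sigma_B} \times \mathcal{R}_{\N}.
\]

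The final step is to deduce flow equivalence of the two-sided systems from this stable groupoid isomorphism. Here I would invoke the Matsumoto--Matui correspondence: for shifts of finite type, stable isomorphism of the one-sided Deaconu--Renault groupoids coincides with flow equivalence of the two-sided shifts, since the suspension of $(\sigma_A, \underline{X}_A)$ can be recovered from the mapping torus of the canonical $\Z$-valued cocycle on $G_{\sigma_A}$, and stable (Kakutani) equivalence of these groupoids is exactly orientation-preserving homeomorphism of the suspensions. The main obstacle I expect is precisely this last translation from the one-sided groupoid picture to genuine flow equivalence of the two-sided shifts: the natural groupoid is built from the one-sided system while flow equivalence is formulated for the two-sided suspension, so one needs the mapping torus identification together with the surjectivity of $\sigma_A$ on $X_A$ that makes $\underline{X}_A$ the natural extension of $(\sigma_A, X_A)$. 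For reducible matrices this requires the generalisation carried out in \cite{CEOR}, where the Matsumoto--Matui argument is extended beyond the irreducible setting.
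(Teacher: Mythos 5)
Your forward direction is sound and is exactly the route the paper attributes to Cuntz and Krieger \cite[Theorem 4.1]{Cuntz-Krieger1980}: reduce via Parry--Sullivan \cite{Parry-Sullivan} to conjugacy (covered by \cref{thm:Carlsen-Rout}, or already by the original argument) and symbol expansion, and handle the latter by an explicit Cuntz--Krieger family using matrix units in $\K$. The reverse direction, however, has a genuine gap at its final step. After \cref{thm:Renault-reconstruction} you correctly obtain an isomorphism of the stabilised one-sided Deaconu--Renault groupoids, but you then assert that this ``is exactly'' flow equivalence of the two-sided suspensions via the mapping torus of the canonical cocycle. That identification would only be valid if the groupoid isomorphism intertwined the canonical cocycles $c_{\sigma_A}$ and $c_{\sigma_B}$ --- and if it did, you would be in the gauge-equivariant situation of \cref{thm:Carlsen-Rout} and would conclude two-sided \emph{conjugacy}, which is strictly stronger and false in general (the full $2$-shift and the golden mean shift of \cref{ex:symbol-expansion} are flow equivalent but have different entropy). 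A diagonal-preserving isomorphism gives no control over where $c_{\sigma_A}$ is sent, so the suspension of $\underline{X}_A$ cannot be transported to that of $\underline{X}_B$ by your argument. You are, in effect, invoking the theorem to prove the theorem.

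The missing content is precisely what makes the Matsumoto--Matui proof remarkable: one must show that a cocycle-ignorant isomorphism of the groupoids still forces flow equivalence. In \cite{Matsumoto-Matui2014} this is done by identifying the first cohomology of the \'etale groupoid with the ordered cohomology of Boyle and Handelman \cite{Boyle-Handelman1996}, so that the groupoid isomorphism yields an isomorphism of ordered cohomology (equivalently, of Bowen--Franks data) that need not preserve the distinguished class of the canonical cocycle; Huang's theorem \cite[Theorem 2.15]{Huang1994} that every automorphism of the Bowen--Franks group is induced by a flow equivalence, together with Franks' classification \cite{Franks1984}, then closes the gap in the irreducible case. For general (reducible) square $\N$-matrices, as the statement requires, even this strategy fails and one needs the explicit construction of a flow equivalence in \cite[Section 3.1]{CEOR}. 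Your proposal should either supply this cohomological argument and the reduction to Franks' and Huang's theorems, or restrict its claim and cite \cite{CEOR} for the reducible case.
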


This means that although $\OO_2 \cong \OO_{2-}$, cf.~\cref{ex:Cuntz-splice}, this *-isomorphism cannot be diagonal-preserving.

As mentioned above, one direction was already observed by Cuntz and Krieger.
The proof of the other direction (assuming stable diagonal *-isomorphism and showing flow equivalence) 
is remarkable in that it used Matusmoto's notion of continuous orbit equivalence (see~\cref{sec:continuous-orbit-equivalence}), its connection to C*-algebras,
and the fact that it implies flow equivalence, \'etale topological groupoids and their cohomology which the authors identify with the ordered cohomology of Boyle and Handelman,
Huang's theorem~\cite[Theorem 2.15]{Huang1994} that any automorphism of the Bowen--Franks group is induced by a flow equivalence,
and finally Franks' classification in terms of the signed Bowen--Franks invariant.

This proof strategy does not quite work for general shifts of finite type,
and the interesting aspect of the generalisation to the reducible case
is the construction of an explicit flow equivalence~\cite[Section 3.1]{CEOR}.
It remains an open problem if a similar result holds for general subshifts
although some progress was made in~\cite[Section 8]{Brix-Carlsen2020b}.

\begin{question}
  Let $(\sigma_X, \underline{X})$ and $(\sigma_Y, \underline{Y})$ be general subshifts and suppose there exists a *-isomorphism 
$\varphi\colon \OO_X\otimes \K \to \OO_Y\otimes \K$ satisfying $\varphi(C(X)\otimes c_0) = C(Y)\otimes c_0$.
Does it follow that $(\sigma_X, \underline{X})$ and $(\sigma_Y, \underline{Y})$ are flow equivalent?
\end{question}

Eilers, Restorff, and Ruiz~\cite[Example 4.4]{Eilers-Restorff-Ruiz2009} exhibit an example of two substitution shifts that are not flow equivalent
but whose C*-algebras are stably isomorphic.

\subsection{Back to shift equivalence}
It is easy to verify that conjugacy implies flow equivalence (from the definitions).
Let us briefly here mention a connection to shift equivalence, cf.~\cref{sec:shift-equivalence}.
The Bowen--Franks group is invariant under shift equivalence, and so is the spectrum away from zero,
so $\det(\Id - A)$ is also preserved. 
By Franks' classification we therefore have the following observation.

\begin{corollary} \label{cor:SEimpliesFE}
  For irreducible and nonpermutation $\N$-matrices,
  shift equivalence implies flow equivalence.
\end{corollary}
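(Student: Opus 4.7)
The plan is to show that both components of Franks' complete flow invariant—the Bowen--Franks group $\BF(A) = \Z^{|A|}/(\Id - A)\Z^{|A|}$ and the sign of $\det(\Id - A)$—are preserved under shift equivalence; the corollary then follows immediately from Franks' classification theorem stated earlier in this section.

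For the Bowen--Franks group, suppose $A$ and $B$ are shift equivalent of lag $\ell$ via $\N$-matrices $R$ and $S$. The intertwining $AR = RB$ shows that right multiplication by $R$ sends $(\Id - B)\Z^{|B|}$ into $(\Id - A)\Z^{|A|}$, so it descends to a homomorphism $\bar R \colon \BF(B) \to \BF(A)$; symmetrically one obtains $\bar S \colon \BF(A) \to \BF(B)$. The key observation is that multiplication by $A$ is the identity on $\BF(A)$ (directly from the defining relation of the quotient), so multiplication by $A^\ell = RS$ is the identity too, giving $\bar R \bar S = \mathrm{id}_{\BF(A)}$; the symmetric argument gives $\bar S \bar R = \mathrm{id}_{\BF(B)}$. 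Hence $\bar R$ and $\bar S$ are mutually inverse isomorphisms and $\BF(A) \cong \BF(B)$.

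For the determinant, I would show that $A$ and $B$ have the same nonzero spectrum with algebraic multiplicity. Iterating $AR = RB$ shows that $R$ maps the generalized $\mu$-eigenspace of $B$ into the generalized $\mu$-eigenspace of $A$, and similarly for $S$. For nonzero $\mu$, the compositions $RS = A^\ell$ and $SR = B^\ell$ act invertibly on these generalized eigenspaces, so $R$ and $S$ restrict to mutually inverse linear isomorphisms between them, and in particular the dimensions agree. Thus the characteristic polynomials of $A$ and $B$ coincide up to a power of the variable; since zero eigenvalues contribute the factor $1$ to $\det(\Id - A) = \prod_i (1 - \lambda_i)$, this forces $\det(\Id - A) = \det(\Id - B)$, and in particular the signs agree.

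With both invariants matched, Franks' classification yields a flow equivalence between $(\sigma_A, \underline{X}_A)$ and $(\sigma_B, \underline{X}_B)$. The most delicate point is the nonzero-spectrum comparison: a naive attempt to extract $\ell$-th roots of the eigenvalues of $A^\ell$ to recover those of $A$ introduces a roots-of-unity ambiguity, so one must instead track generalized eigenspaces directly via the intertwining relations, which sidesteps root extraction entirely.
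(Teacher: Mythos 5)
Your proof is correct and takes essentially the same route as the paper: the paper likewise deduces the corollary from Franks' classification by observing that the Bowen--Franks group and the spectrum away from zero (hence $\det(\Id-A)$) are invariants of shift equivalence. You have simply supplied the verifications of these two invariance facts, which the paper asserts without proof.
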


Interestingly, Huang~\cite[Corollary 3.4]{Huang1994} uses the classification of flow equivalence of two-component shifts of finite type
to show that shift equivalence implies flow equivalence also in this case; 
Huang also shows~\cite[Corollary 3.12]{Huang1995} that this happens whenever the Bowen--Franks group is finite
(I thank Efren Ruiz for alerting me to this result).
It is still not known whether this happens for all reducible shifts of finite type.

A somewhat surprising way of rephrasing~\cref{cor:SEimpliesFE} is by combining Bratteli and Kishimoto's characterisation of shift equivalence~\cref{thm:Bratteli-Kishimoto}
with Matsumoto and Matui's result on flow equivalence~\cref{thm:MM14-flow}.

\begin{corollary} 
  Let $A$ and $B$ be primitive $\N$-matrices.
  If there is a *-isomorphism $\varphi\colon \OO_A\otimes \K \to \OO_B\otimes \K$ satisfying $\varphi\circ (\gamma^A\otimes \Id) = (\gamma^B\otimes \Id)\circ \varphi$,
  then there is a (possibly different) *-isomorphism $\varphi' \colon \OO_A\otimes \K \to \OO_B\otimes \K$ satisfying $\varphi'(C(X_A)\otimes c_0) = C(X_B)\otimes c_0$.
\end{corollary}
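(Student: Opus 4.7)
The statement is essentially a chaining together of three results established earlier in the paper, so the plan is largely bookkeeping: use the hypothesis to feed into Bratteli--Kishimoto, move through shift equivalence into flow equivalence, and then invoke Matsumoto--Matui to produce a (different) diagonal-preserving *-isomorphism.

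More precisely, the first step is to apply \cref{thm:Bratteli-Kishimoto}. The hypothesis supplies precisely the gauge-equivariant stable *-isomorphism required there, and since $A$ and $B$ are assumed primitive, we may conclude that $A$ and $B$ are shift equivalent in the sense of \cref{def:shift-equivalence}. No additional assumption is needed here, as primitivity of both matrices is exactly the hypothesis of \cref{thm:Bratteli-Kishimoto}.

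Next, I would invoke \cref{cor:SEimpliesFE}. Primitive $\N$-matrices are in particular irreducible and nonpermutation, so shift equivalence of $A$ and $B$ forces the two-sided shifts $(\sigma_A, \underline{X}_A)$ and $(\sigma_B, \underline{X}_B)$ to be flow equivalent. Recall that the route here goes via Franks' classification: shift equivalence preserves both the Bowen--Franks group and the spectrum away from zero, hence it preserves $\det(\Id - A)$ and in particular its sign, so the signed Bowen--Franks invariant agrees for $A$ and $B$, and Franks' theorem then produces a flow equivalence.

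Finally, I would feed this flow equivalence into \cref{thm:MM14-flow} (the Matsumoto--Matui characterisation), which yields a stable *-isomorphism $\varphi'\colon \OO_A\otimes \K \to \OO_B\otimes \K$ with $\varphi'(C(X_A)\otimes c_0) = C(X_B)\otimes c_0$, as required. Since the flow equivalence produced by Franks' theorem bears no relation to the original gauge-equivariant $\varphi$, it is to be expected (and appropriately flagged by the word \emph{possibly different} in the statement) that $\varphi'$ need not equal $\varphi$. The main conceptual point -- and the only place where one might worry -- is that the conclusions of the two black-box theorems involve genuinely different fine structures of $\OO_A\otimes \K$ (equivariance on the one hand, preservation of the canonical masa on the other), and the corollary asserts that for primitive matrices the former structural equivalence forces the existence of the latter, mediated entirely by the classical invariants of symbolic dynamics.
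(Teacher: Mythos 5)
Your proposal is correct and follows exactly the route the paper intends: Bratteli--Kishimoto converts the gauge-equivariant stable *-isomorphism into shift equivalence, \cref{cor:SEimpliesFE} (via Franks' classification) upgrades this to flow equivalence, and Matsumoto--Matui then yields the diagonal-preserving stable *-isomorphism. The paper presents the corollary precisely as this combination of \cref{thm:Bratteli-Kishimoto}, \cref{cor:SEimpliesFE}, and \cref{thm:MM14-flow}, so there is nothing to add.
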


This means that the *-isomorphism in~\cref{ex:Cuntz-splice} cannot be equivariant,
since this would imply flow equivalence.

It is important to note that we cannot expect $\varphi$ and $\varphi'$ to be identical;
that is, we cannot expect an equivariant *-isomorphism to automatically also be diagonal-preserving.
If this were the case, then we would have a two-sided conjugacy, cf.~\cref{thm:Carlsen-Rout},
but we know from the counterexample of Kim and Roush~\cref{ex:Kim-Roush} that this is not necessarily the case.

\section{Continuous orbit equivalence} \label{sec:continuous-orbit-equivalence}
Finally, we address the notion of one-sided continuous orbit equivalence.
This should be thought of as the one-sided analogue of flow equivalence which we just discussed.
We should emphasise that the one-sided case is quite distinct from the two-sided case.

\subsection{Cantor minimal systems} \label{sec:CMS}
For a minimal homeomorphism $\phi\colon X\to X$ on the Cantor space $X$ (short: \emph{Cantor minimal systems}), 
Giordano, Putnam, and Skau~\cite{Giordano-Putnam-Skau1995} managed to completely classify $\phi$ up to (variations of) orbit equivalence.
Important examples of Cantor minimal systems include odometers and Sturmian shifts.
The crossed product C*-algebra $C(X)\rtimes_\phi \Z$ is a simple A$\T$-algebra (inductive limit of circle algebras)
and its $K_0$-group is a simple dimension group that classifies these crossed products up to *-isomorphism 
when considered as an ordered group with a distinguished order unit (the $K_1$-group is always isomorphic to the integers).
The $K_0$-group is order isomorphic to (and may hence be described concretely as) the coinvariants
\begin{equation} \label{eq:co-invariants}
  K^0(X, \phi) = C(X, \Z) / \{ f - f\circ \phi^{-1} : f\in C(X, \Z) \}
\end{equation}
with a distinguished order unit given by the class of the function $1$.
An element $a\in K^0(X,\phi)$ is infinitesimal if $-\varepsilon 1 \leq a \leq \varepsilon 1$ for all rational $0< \varepsilon$,
and the infinitesimals form a subgroup $\Inf(K^0(X,\phi))$ of $K^0(X,\phi)$.

An orbit equivalence is a homeomorphism $h\colon X_1 \to X_2$ that sends the orbit of any $x\in X_1$ onto the orbit of $h(x)\in X_2$.
Equivalently, there are cocycle functions $n\colon X_1\to \Z$ and $m\colon X_2 \to \Z$
such that $h\circ \phi_1(x) = \phi_2^{n(x)}\circ h(x)$ for all $x\in X_1$ and $h^{-1}\circ \phi_2(y) = \phi_1^{m(y)}\circ h^{-1}(y)$ for all $y\in X_2$.
Strong orbit equivalence assumes that the cocycle functions have at most one point of discontinuity,
and continuous orbit equivalence simply assumes that the cocycle functions are continuous.

The \emph{full group} $[\phi]$ of $(X,\phi)$ is the group consisting of all self-orbit equivalences, 
i.e. a homeomorphism $\psi$ is in $[\phi]$ if there is a function $n\colon X \to \Z$ such that $\psi(x) = \phi^{n(x)}(x)$ for all $x\in X$.
The \emph{topological full group} $[[\phi]]$ is the subgroup of orbit equivalences for which the cocycle function $n$ can be taken to be continuous.
Giordano, Putnam, and Skau proved that Cantor minimal systems $(X_1,\phi_1)$ and $(X_2,\phi_2)$ are 
\begin{itemize}
  \item topologically orbit equivalent precisely when their dimension groups with distinguished unit modulo the infinitesimals are isomorphic~\cite[Theorem 2.2]{Giordano-Putnam-Skau1995},
    and if and only the full groups are isomorphic~\cite[Corollary 4.6]{Giordano-Putnam-Skau1999};
  \item strong orbit equivalent precisely when their dimension groups with units are isomorphic, 
    and that this happens if and only if the crossed products are *-isomorphic~\cite[Theorem 2.2]{Giordano-Putnam-Skau1995}; and
  \item continuously orbit equivalent precisely when their crossed products are *-isomorphic in a diagonal-preserving way~\cite[Theorem 2.4]{Giordano-Putnam-Skau1995},
    and if and only if their topological full groups are isomorphic~\cite[Corollary 4.4]{Giordano-Putnam-Skau1999}.
\end{itemize}
In the third bullet above, we emphasise the definition of continuous orbit equivalence because it provides a nice analogue to the one-sided case
that we shall discuss shortly.
However, Boyle~\cite{Boyle1983} showed in his thesis that this coincides with flip conjugacy (i.e. that $\phi_1$ is conjugate to either $\phi_2$ or $\phi_2^{-1}$),
and the facts that continuous orbit equivalence implies flip conjugacy and 
that this coincides with diagonal-preserving *-isomorphism of crossed products was later generalised to the setting of topologically transitive and topologically free homeomorphisms of compact Hausdorff spaces
by Boyle and Tomiyama~\cite{Boyle-Tomiyama1998}.

The topological full groups from Cantor minimal systems admit interesting group theoretic properties.
Indeed, Jushenko and Monod~\cite[Theorem A]{Juschenko-Monod2013} showed that these groups are always amenable,
and by restricting to those Cantor minimal systems that are conjugate to minimal shifts, they exhibited the first examples of (actually uncountably many) infinite finitely generated simple and amenable groups,
cf.~\cite{Matui2006}.

If we consider homeomorphisms on compact Hausdorff spaces as $\Z$-actions then flip conjugacy is really the correct notion of conjugacy of such actions,
so the Boyle--Tomiyama result says that topologically transitive and topologically free $\Z$-actions on compact Hausdorff spaces are continuously orbit equivalence rigid.
Continuous orbit equivalence for general group actions and its connection to diagonal-preserving *-isomorphism of crossed products 
was thoroughly studied by Li in~\cite{Li2018} in which examples that are not continuously orbit equivalence rigid also appear.

\subsection{One-sided continuous orbit equivalence}
For irreducible shifts of finite type, the notion of one-sided continuous orbit equivalence was introduced by Matsumoto~\cite{Matsumoto2010}.
The general definition can be found e.g.~in~\cite[Definition 8.1]{CRST}.
\begin{definition} \label{def:coe}
  Let $T\colon X\to X$ and $S\colon Y\to Y$ be local homeomorphisms on locally compact Hausdorff spaces $X$ and $Y$, respectively.
  Then $T$ and $S$ are \emph{continuously orbit equivalent} if there exist a homeomorphism $h\colon X\to Y$
  and continuous functions $k_X,l_X\colon X\to \N$ and $k_Y,l_Y\colon Y\to \N$ such that
  \begin{align} 
    S^{l_X(x)}\circ h(x) &= S^{k_X(x)}\circ h\circ T(x), \label{eq:coe1} \\
    T^{l_Y(y)}\circ h^{-1}(y) &= T^{k_Y(y)}\circ h^{-1}\circ S(y), \label{eq:coe2} 
  \end{align}
  for all $x\in X$ and $y\in Y$.
\end{definition}

\begin{remark}
  Note that if in the above definition, the cocycles for $h$ can be chosen to be $l_X = k_X + 1$ and $l_Y = k_Y + 1$,
  then $h$ is a Matsumoto eventual conjugacy, cf.~\cref{def:Matsumoto-eventual-conjugacy}.
\end{remark}

\begin{example}
  The graphs in~\cref{ex:symbol-expansion} are continuously orbit equivalent;
  an explicit map can be constructed using the same symbol expansion as in that example, see~\cite[p. 203]{Matsumoto2010}.
\end{example}

Matsumoto had a clear motivation from C*-algebras, and~\cite[Theorem 1.1]{Matsumoto2010} shows that continuous orbit equivalence
is the same as diagonal-preserving *-isomorphism of Cuntz--Krieger algebras.
Later in~\cite[Theorem 3.6]{Matsumoto-Matui2014}, Matsumoto and Matui characterised continuous orbit equivalence in terms of the \emph{unital} signed Bowen--Franks group.
The \emph{unital} Bowen--Franks group is the cokernel $\BF(A) = \Z^{|A|}/ (\Id-A)\Z^{|A|}$ together with the class of the vector $(1,\ldots,1)\in \Z^{|A|}$ in $\BF(A)$,
we denote the class as $u_A$.
We summarise these results.

\begin{theorem} \label{thm:coe-unital-signed-BF}
  Let $A$ and $B$ be irreducible and nonpermutation $\N$-matrices.
  The one-sided shifts of finite type $(\sigma_A, X_A)$ and $(\sigma_B, X_B)$ are continuously orbit equivalent 
  if and only if $(\BF(A), u_A) \cong (\BF(B), u_B)$ and $\det(\Id - A) = \det(\Id - B)$,
  and if and only if there is a *-isomorphism $\varphi\colon \OO_A \to \OO_B$ satisfying $\varphi(C(X_A)) = C(X_B)$.
\end{theorem}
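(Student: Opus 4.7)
The plan is to prove the theorem as a three-way equivalence, with the diagonal-preserving *-isomorphism statement serving as a bridge between the dynamical notion of continuous orbit equivalence and the algebraic invariant.

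For the equivalence between continuous orbit equivalence and diagonal-preserving *-isomorphism, I would work at the level of the Deaconu--Renault groupoids $G_{\sigma_A}$ and $G_{\sigma_B}$. These are second-countable, Hausdorff, \'etale, and (using irreducibility and nonpermutation to rule out pathological isotropy) satisfy the hypotheses of Renault's reconstruction theorem, \cref{thm:Renault-reconstruction}. Given a continuous orbit equivalence $h\colon X_A \to X_B$ with cocycles $k_X, l_X, k_Y, l_Y$, the assignment
\[
  (x,\, 1,\, \sigma_A(x)) \longmapsto \bigl(h(x),\, l_X(x)-k_X(x),\, h(\sigma_A(x))\bigr)
\]
extends multiplicatively to a groupoid isomorphism $G_{\sigma_A} \to G_{\sigma_B}$ that preserves the unit spaces; the defining equations of continuous orbit equivalence are precisely what is needed for well-definedness, and the cocycles from $h^{-1}$ deliver the inverse. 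Passing to groupoid C*-algebras yields the desired diagonal-preserving *-isomorphism. Conversely, a diagonal-preserving *-isomorphism yields, via \cref{thm:Renault-reconstruction}, such a groupoid isomorphism, whose restriction to unit spaces is a homeomorphism $h$ and whose $\Z$-component on generators recovers continuous cocycles satisfying \cref{eq:coe1,eq:coe2}.

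For the equivalence with the unital signed Bowen--Franks invariant, I would combine Cuntz's K-theoretic computation with classification ideas. Any *-isomorphism $\OO_A \to \OO_B$ (with or without the diagonal condition) induces a pointed group isomorphism $(K_0(\OO_A), [1]) \cong (\BF(A), u_A)$ via the Pimsner--Voiculescu identification $K_0(\OO_A) \cong \BF(A^t) \cong \BF(A)$, supplying the pointed Bowen--Franks part. The determinant is a strictly finer invariant that appears only in the presence of the diagonal: following Matsumoto--Matui, one identifies $\det(\Id - A)$ with a cohomological invariant of $G_{\sigma_A}$ (computable via the Boyle--Handelman ordered cohomology that is detected by $C(X_A)$ together with the canonical $\Z$-cocycle) which is preserved under diagonal-preserving isomorphism but can change under arbitrary *-isomorphism. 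For the converse, I would construct a continuous orbit equivalence from matching invariants by first applying Franks' flow equivalence classification to obtain a flow equivalence compatible with the signed Bowen--Franks data, and then using the equality of determinants (not merely their signs) together with the unit class to sharpen this flow equivalence to an orbit equivalence whose cocycles are continuous on cross sections.

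The main obstacle is the constructive direction of the second equivalence: producing a diagonal-preserving *-isomorphism, equivalently a continuous orbit equivalence, from agreement of the algebraic invariants. The Kirchberg--Phillips style classification of simple Cuntz--Krieger algebras yields a plain *-isomorphism from strictly less data than the theorem provides, and such an isomorphism need not respect the diagonal; indeed, the Cuntz splice (\cref{ex:Cuntz-splice}) exhibits algebras that are *-isomorphic but not diagonally *-isomorphic precisely because the sign of the determinant is flipped. The technical heart of the argument is therefore to show that the determinant condition captures exactly the obstruction to lifting an abstract K-theoretic isomorphism to a diagonal-preserving one, which is where the cohomological bookkeeping of Matsumoto--Matui enters.
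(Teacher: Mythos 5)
The paper does not actually prove this theorem; it is a survey statement assembled from Matsumoto's \cite[Theorem 1.1]{Matsumoto2010} and Matsumoto--Matui's \cite[Theorem 3.6]{Matsumoto-Matui2014}, the only hint about the argument being the remark that ``part of the proof involves showing that continuous orbit equivalence implies flow equivalence.'' Your first equivalence (continuous orbit equivalence iff diagonal-preserving *-isomorphism) follows the same route as those references: encode the orbit equivalence as an isomorphism of Deaconu--Renault groupoids and invoke Renault-type reconstruction, i.e.\ \cref{thm:CRST-coe} specialised to shifts of finite type, where irreducibility and nonpermutation make the groupoid effective so the stabiliser-preserving caveat is automatic. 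That part of your outline is sound, modulo checking that your prescription on the generating bisections is well defined, which is exactly where \cref{eq:coe1,eq:coe2} are used.

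The genuine gap is in passing from $(\BF(A),u_A)\cong(\BF(B),u_B)$ and $\det(\Id-A)=\det(\Id-B)$ back to a continuous orbit equivalence. You propose to ``first obtain a flow equivalence via Franks' classification and then sharpen it to an orbit equivalence whose cocycles are continuous on cross sections,'' but flow equivalence does not imply one-sided continuous orbit equivalence --- the implication runs the other way (continuous orbit equivalence $\Rightarrow$ flow equivalence), as the paper itself stresses. The entire content of this direction is the role of the unit class $u_A$: Matsumoto--Matui work in the stabilised, flow-invariant picture $(\OO_A\otimes\K, C(X_A)\otimes c_0)$, use Huang's realisation of Bowen--Franks isomorphisms by flow equivalences together with the ordered-cohomology description of the groupoid, and then cut down by a projection in the diagonal whose $K_0$-class is $u_A$ to return to the unital, diagonal-preserving (hence continuous-orbit-equivalence) setting. ``Sharpening a flow equivalence'' without this corner/unit argument is precisely the missing step, and it cannot be supplied by $K$-theory or Kirchberg--Phillips classification alone, as your own discussion of the Cuntz splice shows. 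A smaller point: $K_0(\OO_A)\cong\BF(A^t)$, not $\BF(A)$; the pointed groups $(\BF(A),u_A)$ and $(\BF(A^t),u_{A^t})$ are indeed isomorphic, but that is a fact requiring justification rather than the tautology your chain of isomorphisms suggests.
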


Part of the proof involves showing that continuous orbit equivalence implies flow equivalence (see~\cref{sec:flow-equivalence}).
The connection between continuous orbit equivalence and diagonal-preserving *-isomorphism of Cuntz--Krieger algebras
and the fact that continuous orbit equivalence implies flow equivalence
was again generalised to reducible shifts of finite type in~\cite{CEOR}.

\begin{example}
  Ashley's graphs~\cref{ex:Ashley} are continuously orbit equivalent:
  since the two systems are primitive and flow equivalent, and the Bowen--Franks group are trivial (so the condition on the class of the unit is trivially satisfied),
  this follows from the above theorem.
\end{example}

\begin{example}
  Kim and Roush's primitive counterexample to the Williams problem~\cref{ex:Kim-Roush}
  are continuously orbit equivalent, cf.~\cite[Example 4.9]{Eilers-Ruiz}.
\end{example}

\begin{example}
  The matrices
  \[
    A = 
    \begin{pmatrix}
      1 & 1 & 1 \\
      1 & 1 & 1 \\
      1 & 0 & 0
    \end{pmatrix}, \qquad \textrm{and} \qquad
    B =
    \begin{pmatrix}
      1 & 1 & 1 \\
      1 & 1 & 0 \\
      1 & 1 & 0
    \end{pmatrix}.
  \]
  are strong shift equivalent:
  $A$ is an out-split and $B$ is an in-split of $ \begin{pmatrix}  1 & 2 \\  1 & 1 \end{pmatrix}$.
  However, $\OO_A \cong \OO_3$ and $\OO_B \cong \OO_3\otimes M_2$, so they are not continuously orbit equivalent, cf.~\cite[Section 3]{Matsumoto2017-ucoe}.
\end{example}

An algebraic characterisation of continuous orbit equivalence for reducible shifts of finite type that generalises the unital signed Bowen--Franks invariant seems to still be missing.
Therefore, it is still not clear whether this is decidable.

For local homeomorphisms,~\cite[Theorem 8.2]{CRST} shows that continuous orbit equivalence is characterised by diagonal-preserving *-isomorphism.
The proof of this relies on groupoid reconstruction, cf.~\cref{thm:Renault-reconstruction}.

\begin{theorem} \label{thm:CRST-coe}
  Let $T\colon X \to X$ and $S\colon Y \to Y$ be local homeomorphisms.
  Then $T$ and $S$ are (stabiliser-preserving) continuously orbit equivalent if and only if there is a *-isomorphism
  $\varphi\colon C^*(T) \to C^*(S)$ satisfying $\varphi(C_0(X)) = C_0(Y)$.
\end{theorem}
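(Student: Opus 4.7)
The plan is to deduce this biconditional by reducing both directions to the reconstruction of the underlying Deaconu–Renault groupoids $G_T$ and $G_S$, exploiting the earlier identification $C^*(T) = C^*(G_T)$ with canonical diagonal $C_0(X) = C_0(G_T^{(0)})$. This way the statement becomes: a (stabiliser-preserving) continuous orbit equivalence $T \sim S$ is the same data as a topological groupoid isomorphism $G_T \cong G_S$, and the C*-algebraic equivalence then comes from functoriality in one direction and \cref{thm:Renault-reconstruction} in the other.

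For the forward implication, assume a homeomorphism $h\colon X \to Y$ together with continuous cocycles $k_X, l_X, k_Y, l_Y$ satisfying \eqref{eq:coe1} and \eqref{eq:coe2}. The first step is to promote $h$ to a groupoid isomorphism $\Phi\colon G_T \to G_S$ by declaring, for $(x, m-n, y) \in G_T$ with $T^m x = T^n y$,
\[
  \Phi(x, m-n, y) \;=\; \bigl(h(x),\; c(x, m-n, y),\; h(y)\bigr),
\]
where the integer $c(x,m-n,y)$ is produced by iterating \eqref{eq:coe1} along the finite orbit segments $x, T(x), \ldots, T^m(x)$ and $y, S(y), \ldots, S^n(y)$ to obtain $S^M h(x) = S^N h(y)$, and setting $c = M - N$. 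The verifications are then: well-definedness (independence of the representative $(m,n)$, which is where the stabiliser-preserving hypothesis enters to rule out the creation of new isotropy), continuity and openness (inherited from continuity of the cocycles and local injectivity of $T, S$), and the groupoid axioms. Once $\Phi$ is a topological groupoid isomorphism, the induced *-isomorphism $\varphi = \Phi_*\colon C^*(G_T) \to C^*(G_S)$ restricts to a homeomorphism of unit spaces equal to $h$, so $\varphi(C_0(X)) = C_0(Y)$.

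For the converse, I would invoke \cref{thm:Renault-reconstruction} directly. After checking the hypotheses for $G_T$ and $G_S$ (second-countable, Hausdorff, étale is immediate from $T$ and $S$ being local homeomorphisms; abelian and torsion-free interior of isotropy follows from the explicit $\mathbb{Z}$-valued cocycle structure $c_\sigma$ on the Deaconu–Renault groupoid), the theorem hands over a topological groupoid isomorphism $\Phi\colon G_T \to G_S$. Set $h = \Phi|_{G_T^{(0)}}$. The crux is to manufacture the cocycle functions: for each $x \in X$, the element $(x, 1, T(x))$ lies in an open bisection $U \subset G_T$ on which $\Phi$ restricts to a homeomorphism onto an open bisection $\Phi(U) \subset G_S$, and the étale structure provides natural numbers $l_X(x), k_X(x)$ with $S^{l_X(x)} h(x) = S^{k_X(x)} h(T(x))$. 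Locally on such bisection charts these can be chosen to be constants, and one patches them to continuous $\mathbb{N}$-valued functions on $X$; the analogous construction for $\Phi^{-1}$ produces $k_Y, l_Y$.

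The main obstacle, as in most results of this flavour, is the converse direction, and specifically turning a mere topological groupoid isomorphism $\Phi$ — which carries no information at all about the canonical $\mathbb{Z}$-grading $c_\sigma$ on either side — into globally defined continuous cocycle functions $k_X, l_X, k_Y, l_Y$. Unlike the situation of \cref{thm:Matsumoto-eventual-conjugacy}, where gauge-equivariance forces $\Phi$ to intertwine the two canonical cocycles and cocycle values are essentially handed to us, here one must reconstruct them by covering $X$ with open bisection neighbourhoods on which $\Phi$ acts by integer shifts and then checking that these local choices glue coherently. The stabiliser-preserving hypothesis is exactly what rules out the pathology where $\Phi$ sends a nontrivial isotropy element to something outside the image of $c_\sigma$, and so is exactly what is needed to make the patching succeed.
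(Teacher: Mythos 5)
Your proposal is correct and follows essentially the route the paper indicates for \cite[Theorem 8.2]{CRST}: identify a (stabiliser-preserving) continuous orbit equivalence with an isomorphism of the Deaconu--Renault groupoids $G_T\cong G_S$, then pass between groupoids and C*-algebras via functoriality in one direction and the reconstruction theorem (\cref{thm:Renault-reconstruction}) in the other. One small correction: the stabiliser-preserving hypothesis is consumed entirely in the forward direction (well-definedness of your $\Phi$), exactly as in your second paragraph, whereas in the converse it is a \emph{conclusion} -- a groupoid isomorphism automatically preserves isotropy -- so your closing remark misplaces where that hypothesis is needed; the patching of the cocycles there succeeds with no extra input.
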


This is used to show in~\cite[Theorem 6.10]{Brix-Carlsen2020b} (see also~\cite[Theorem 1.2]{Matsumoto2020}) that (under certain technical conditions)
shift spaces $(\sigma_X, X)$ and $(\sigma_Y,Y)$ are continuously orbit equivalent
if and only if there is a *-isomorphism $\varphi\colon \OO_X \to \OO_Y$ satisfying $\varphi(C(X)) = C(Y)$.
Note that this is not immediate from~\cref{thm:CRST-coe}:
the shifts $\sigma_X$ and $\sigma_Y$ are generally \emph{not} local homeomorphisms (only when they are of finite type)
but the actions on the covers $(\sigma_{\tilde{X}}, \tilde{X})$ and $(\sigma_{\tilde{Y}}, \tilde{Y})$ are,
so the theorem can only be applied to the covers;
the difficulty is then to understand how a continuous orbit equivalence among shift spaces passes to the covers (and back again). 
At the moment, it is not clear if the technical conditions can be removed. 

We also state a question related to flow equivalence.

\begin{question}
  Does continuous orbit equivalence imply flow equivalence?
\end{question}

\begin{example}
The labelled graphs
\[
  \begin{tikzcd}
    \bullet \arrow[loop, looseness=5, "1" above] \arrow[r, bend right, "0" below] & \bullet \arrow[l, bend right, "0" above]
  \end{tikzcd} \qquad \textrm{and} \qquad
  \begin{tikzcd}
    \bullet \arrow[r, bend left=100, "0" above] \arrow[r, bend right, "1" below] & \bullet \arrow[l, bend right, "1" above]
  \end{tikzcd} 
\]
represent the even and odd shifts, respectively.
We encountered the left-most graph as representing a sofic shift in~\cref{ex:even-shift}.
In the right-most labelled path space, only an odd number of $0$s are allowed between any two $1$s,
so this is again a sofic shifts.
A continuous orbit equivalence between the even and odd shifts is given by sending any occurrence of $1$ to $10$, cf.~\cite[Example 6.15]{Brix-Carlsen2020b}
(a similar argument shows that they are flow equivalent).
\end{example}

Following the ideas of Giordano, Putnam, and Skau, cf.~\cref{sec:CMS}, it is also possible to characterise the one-sided shifts of finite type up to continuous orbit equivalence 
using a notion of topological full group.
Let $[[\sigma_A]]$ be the subgroup of homeomorphisms $h$ of $X_A$ for which there exist continuous cocycles $k_X,l_X\colon X_A \to \N$ such that relations similar to those in~\cref{def:coe} are satisfied.
Infinite and irreducible shifts of finite type $X_A$ and $X_B$ are then continuously orbit equivalent precisely if there exists a homeomorphism $h\colon X_A \to X_B$ such that 
$h\circ [[\sigma_A]] \circ h^{-1} = [[\sigma_B]]$, cf.~\cite[Theorem 1.1]{Matsumoto2010}. 

For the full $2$-shift, the topological full group $[[\sigma_{(2)}]]$ is isomorphic to Thompson's group $V$, 
and $[[\sigma_{(n)}]]$ is isomorphic to the Higman--Thompson group $V_n$, cf. \cite[Remark 6.3]{Matui2015}.
Matui comments that from this perspective, the topological full groups of shifts of finite type may be viewed as generalisations of Higman--Thompson groups.
 
In fact, Matui vastly generalised the notion of topological full group to the setting of \'etale groupoids~\cite{Matui2012}.
Suppose $G$ is a locally compact Hausdorff and \'etale groupoid whose unit space $G^{(0)}$ is the Cantor space.
The compact open bisections (the subsets of $G$ on which the range and source maps are injective) form an inverse semigroup,
and the topological full group $[[G]]$ of $G$ is the subgroup consisting of the global bisections (those for which the range and source is the whole unit space).

Matui proves that the (abstract) isomorphism class of the topological full groups characterise the isomorphism class of the groupoids, cf.~\cite[Theorem 3.10]{Matui2015}.
Again we refer the reader to~\cite{Sims-notes} for the basics of \'etale groupoids.
The groupoid $G_A$ of a one-sided shift of finite type $X_A$ is minimal and effective precisely when $A$ is an irreducible and nonpermutation matrix.

\begin{theorem} 
  Let $G_1$ and $G_2$ be second-countable, minimal, effective, and \'etale groupoids whose unit spaces are Cantor spaces.
  Then $G_1$ and $G_2$ are isomorphic as topological groupoids if and only if the topological full groups $[[G_1]]$ and $[[G_2]]$ are abstractly isomorphic as groups.
\end{theorem}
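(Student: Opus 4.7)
The plan is to handle the two directions separately. The forward implication is essentially functorial: any topological groupoid isomorphism $\alpha\colon G_1 \to G_2$ sends compact open bisections to compact open bisections, and in particular restricts to a group isomorphism on global bisections $[[G_1]] \to [[G_2]]$. So I would concentrate the work on the converse, where we are given an abstract group isomorphism $\Phi\colon [[G_1]] \to [[G_2]]$ and must construct a topological groupoid isomorphism.

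My first task would be to extract a homeomorphism $h\colon G_1^{(0)} \to G_2^{(0)}$ from $\Phi$. Each $[[G_i]]$ acts on its unit space by sending $s(\gamma) \mapsto r(\gamma)$ for $\gamma$ in a global bisection $U$; effectiveness makes this action faithful, while minimality guarantees a plentiful supply of global bisections realizing prescribed local moves. The plan is to invoke a Rubin-style spatial reconstruction: for each clopen $V \subset G_i^{(0)}$ the rigid stabilizer $[[G_i]]_V := \{ U \in [[G_i]] : U|_{G_i^{(0)}\setminus V} = \Id \}$ should admit an intrinsic group-theoretic characterization inside $[[G_i]]$, typically as a subgroup generated by commutators of involutions with disjoint support. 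Any abstract isomorphism $\Phi$ then necessarily sends such subgroups to their counterparts, producing a Boolean-algebra isomorphism of the lattices of clopen subsets of the Cantor spaces. Stone duality returns a unique homeomorphism $h\colon G_1^{(0)} \to G_2^{(0)}$ that conjugates the two actions.

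Next I would promote $h$ to a topological groupoid isomorphism $\alpha\colon G_1 \to G_2$. Given $\gamma \in G_1$, choose a compact open bisection $V \ni \gamma$; by minimality one extends a suitable restriction of the partial homeomorphism $r \circ (s|_V)^{-1}$ to a global bisection $U_V \in [[G_1]]$. Applying $\Phi$ yields $\Phi(U_V) \in [[G_2]]$, whose action near $h(s(\gamma))$ picks out a compact open bisection of $G_2$ containing a well-defined candidate $\alpha(\gamma)$. The routine verifications — independence of the choice of $V$ and of the global extension $U_V$, compatibility with composition and inversion, bicontinuity of $\alpha$ — are all controlled by the Boolean-algebra data already recovered, because the rigid stabilizer structure records precisely which subsets of $G_i^{(0)}$ a given global bisection moves and how.

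The main obstacle, and the heart of the proof, is the intrinsic group-theoretic characterization of the rigid stabilizers $[[G_i]]_V$ inside $[[G_i]]$. Without such a characterization, $\Phi$ has no a priori reason to respect these subgroups and the Stone-duality bridge collapses. The standard approach identifies alternating-type subgroups built from involutions with pairwise disjoint supports and uses commutator calculations to pin down \emph{abstractly} the notion of an ``element supported on a small clopen set'', simultaneously leaning on effectiveness (so that the action recovers elements), minimality (so that one can always find nontrivial elements with prescribed small support), and the Cantor-space structure of $G_i^{(0)}$ (so that clopen sets form a sufficiently rich Boolean algebra to separate). Once this spatial-realization step is secured, promoting $h$ to a groupoid isomorphism becomes largely formal, since in an \'etale groupoid every arrow lies inside a compact open bisection and such bisections are locally reachable by restricting global ones.
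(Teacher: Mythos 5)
The paper itself offers no proof of this statement: it is quoted as Matui's theorem (\cite[Theorem 3.10]{Matui2015}), so there is no internal argument to compare against. That said, your outline does follow the strategy of Matui's actual proof: the forward direction is functorial, and the converse is a Rubin-type spatial realization (an abstract isomorphism $\Phi\colon [[G_1]]\to [[G_2]]$ is implemented by a homeomorphism $h\colon G_1^{(0)}\to G_2^{(0)}$) followed by a reconstruction of each $G_i$ from the action of $[[G_i]]$ on $G_i^{(0)}$, so that $h$ promotes to a groupoid isomorphism. You correctly identify spatial realization as the heart of the matter.

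Two places where the sketch understates or misattributes the work. First, the spatial realization step: what is actually done is to verify that the action of $[[G]]$ on the Cantor set $G^{(0)}$ satisfies the hypotheses of Rubin's reconstruction theorem --- for every $x$ and every clopen $A\ni x$ there is a nontrivial element of $[[G]]$ supported in $A$ --- using minimality, effectiveness, and the absence of isolated points together; the rigid-stabilizer and commutator machinery is then imported wholesale from Rubin rather than re-derived. Your ``intrinsic characterization of rigid stabilizers'' is the right idea, but it is the entirety of Rubin's theorem, not a lemma one dashes off, and your outline should make clear that the only thing to check is the local-density hypothesis. Second, your promotion step assumes that every arrow $\gamma\in G_1$ lies in some global bisection, and you attribute this to minimality. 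For $s(\gamma)\neq r(\gamma)$ this is elementary and needs no minimality at all: shrink a compact open bisection $V\ni\gamma$ until $s(V)\cap r(V)=\emptyset$ and take the involution $V\cup V^{-1}\cup\bigl(G_1^{(0)}\setminus (s(V)\cup r(V))\bigr)$. The delicate case is a nontrivial isotropy element with $s(\gamma)=r(\gamma)$ (these exist, e.g.\ over periodic points of a shift of finite type); there one cannot separate source and range, and one must argue that the groupoid of germs of $[[G_1]]\curvearrowright G_1^{(0)}$ nevertheless recovers all of $G_1$. This is where effectiveness and minimality earn their keep a second time, and your outline, which declares this part ``largely formal,'' does not address it.
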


Comparing this to Renault's reconstruction theorem~\cref{thm:Renault-reconstruction} we see how this vastly generalises Matsumoto's observations.
The groups arising from continuous orbit equivalences of dynamical systems thus exhibit interesting group theoretic properties
as well as interesting connections to dynamics, groupoids, and C*-algebras.

\end{document}